\crefname{section}{Section}{Sections}
\crefname{subsection}{\S}{\S\S}
\crefname{subsubsection}{\S}{\S\S}
\theoremstyle{plain}
\newtheorem{lemma}{Lemma}[section]
\newtheorem{proposition}[lemma]{Proposition}
\newtheorem{theorem}[lemma]{Theorem}
\theoremstyle{nonumberplain}
\theoremstyle{plain}
\newtheorem{definition}[lemma]{Definition}
\newtheorem{example}[lemma]{Example}
\crefname{definition}{definition}{definitions}
\crefname{ex}{example}{examples}
\crefname{remark}{remark}{remarks}
\crefname{convention}{convention}{conventions}
\crefname{notation}{notation}{notations}
\crefname{table}{table}{tables}
\crefname{lemma}{lemma}{lemmas}
\crefname{proposition}{proposition}{propositions}
\crefname{corollary}{corollary}{corollaries}
\crefname{theorem}{theorem}{theorems}
\crefname{enumi}{}{}
\crefname{assumption}{assumption}{Assumptions}
\crefname{equation}{}{}
\numberwithin{equation}{section}
\theoremstyle{nonumberplain}
\newtheorem{proof}{Proof}
\newcommand\pf[1]{\newtheorem{#1}{Proof of \Cref{#1}}}
\newcommand\bC{{\mathbb C}}
\newcommand\bR{{\mathbb R}}
\newcommand\bS{{\mathbb S}}
\newcommand\bT{{\mathbb T}}
\newcommand\bZ{{\mathbb Z}}
\newcommand\cG{{\mathcal G}}
\newcommand\cP{{\mathcal P}}
\newcommand\fa{{\mathfrak a}}
\newcommand\fg{{\mathfrak g}}
\newcommand\fh{{\mathfrak h}}
\newcommand\fk{{\mathfrak k}}
\newcommand\fm{{\mathfrak m}}
\newcommand\fp{{\mathfrak p}}
\newcommand\fr{{\mathfrak r}}
\newcommand\fs{{\mathfrak s}}
\newcommand{\define}[1]{{\em #1}}
\newcommand{\qedhere}{\mbox{}\hfill\ensuremath{\blacksquare}}
\title{On the dearth of coproducts in the category of locally compact groups}
\author{Alexandru Chirvasitu}
\begin{document}

\date{}

\newcommand{\Addresses}{{
  \bigskip
  \footnotesize

  \textsc{Department of Mathematics, University at Buffalo, Buffalo,
    NY 14260-2900, USA}\par\nopagebreak \textit{E-mail address}:
  \texttt{achirvas@buffalo.edu}

}}

\maketitle

\begin{abstract}
  We prove that a family of at least two non-trivial, almost-connected locally compact groups cannot have a coproduct in the category of locally compact groups if at least one of the groups is connected; this confirms the intuition that coproducts in said category are rather hard to come by, save for the usual ones in the category of discrete groups.

  Along the way we also prove a number of auxiliary results on characteristic indices of locally compact or Lie groups as defined by Iwasawa: that characteristic indices can only decrease when passing to semisimple closed Lie subgroups, and also along dense-image morphisms.
\end{abstract}

\noindent {\em Key words: locally compact group; Lie group; almost-connected; pro-Lie; coproduct; representation}

\vspace{.5cm}

\noindent{MSC 2020: 22D05; 22E46; 22D12; 18A30}


\section*{Introduction}

Many of the familiar categories arising ``in nature'' happen to have {\it coproducts} \cite[\S III.3]{mcl} for families $\{G_i\}_{i\in I}$ of objects: an object $G$ that is the universal recipient of morphisms $G_i\to G$. This is true, for instance, of sets (where the coproduct is just the disjoint union), groups \cite[\S 4.6]{bg-inv}, rings \cite[\S 4.13]{bg-inv}, and in fact more generally, any {\it variety of algebras} in the sense of \cite[\S V.6]{mcl}; that is, the category of sets equipped with maps/relations satisfying of fixed ``shapes'' satisfying a fixed collection of equations (Lie algebras, abelian groups, monoids, etc. etc.). The latter result is a consequence of \cite[Theorem 9.3.8]{bg-inv}, for instance, or \cite[Remark 3.4 (4)]{ar}.

To illustrate the additional complications that obtain in handling {\it topological} algebraic structure, consider how one constructs the coproduct of a family $G_i$, $i\in I$ in the category of \define{compact} (Hausdorff) groups:
\begin{itemize}
\item form the group-theoretic coproduct $G$ of the $G_i$;
\item equip $G$ with the finest group topology making all $G_i\to G$ continuous;
\item then take the \define{Bohr compactification} of $G$ (for the latter construction, see e.g. \cite[\S 1, Corollary]{ah-bohr}).
\end{itemize}

The present paper is concerned with coproducts in the category $\mathcal{LCG}$ of {\it locally} compact (and always, for us, Hausdorff) topological groups. One case is easily dispatched: if the locally compact groups in question happen to all be discrete, simply form their usual coproduct in the category $\mathcal{GP}$ of (ordinary, non-topological) groups \Cref{le:discok}.

As soon as some non-discreteness seeps in things become more difficult. The main result is the no-go \Cref{th:aconn}:

\begin{theorem}\label{th:mainintro}
  A family of at least two almost-connected locally compact groups cannot have a coproduct in $\mathcal{LCG}$ if at least one of those groups is connected. \qedhere
\end{theorem}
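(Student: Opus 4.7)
The plan is to argue by contradiction. Suppose $G=\coprod_{i\in I}G_i$ exists in $\mathcal{LCG}$. By choosing $1,2\in I$ with $G_1$ connected and $G_2$ non-trivial, and quotienting $G$ by the closed normal subgroup generated by the coprojections $\iota_i(G_i)$ for $i\notin\{1,2\}$, we obtain a coproduct for the pair $\{G_1,G_2\}$ in $\mathcal{LCG}$; so it suffices to derive a contradiction in the two-factor case. Denote the remaining coprojections by $\iota_i\colon G_i\to G$.

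The core idea is to exhibit, for every integer $n\ge 1$, continuous morphisms $\phi_{i,n}\colon G_i\to L_n$ into a connected non-compact semisimple Lie group $L_n$ (concretely $L_n:=\mathrm{SL}_n(\bR)$, whose Iwasawa characteristic index $c(L_n)$ tends to $\infty$) chosen so that the joint image $\phi_{1,n}(G_1)\cdot\phi_{2,n}(G_2)$ is dense in $L_n$. The coproduct's universal property then produces a unique continuous $\Phi_n\colon G\to L_n$ with dense image.

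Because $G_1$ is connected, $\iota_1(G_1)\subseteq G_0$, the identity component of $G$. If the construction is arranged so that $\overline{\phi_{1,n}(G_1)}$ is not contained in the finite centre of $L_n$, the (quasi-)simplicity of $\mathrm{SL}_n(\bR)$ forces the closed normal subgroup $\overline{\Phi_n(G_0)}$ of $L_n=\overline{\Phi_n(G)}$ to be all of $L_n$; thus $\Phi_n|_{G_0}\colon G_0\to L_n$ already has dense image. Since $G_0$ is a connected LCG, Iwasawa's structure theorem furnishes a maximal compact subgroup with Euclidean quotient of finite dimension $c(G_0)$. Applying the paper's auxiliary result that $c$ can only decrease along dense-image morphisms then yields
\[
c(L_n)\le c(G_0)<\infty,
\]
contradicting $c(L_n)\to\infty$.

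The chief obstacle lies in the construction of the $\phi_{i,n}$ with joint dense image: $G_2$ is only assumed non-trivial almost-connected, so it may be, for instance, a finite group, leaving little latitude in $\phi_{2,n}$. One exploits the abundance of morphisms from the connected $G_1$ (through its non-trivial Lie quotients, supplied by its pro-Lie structure) by letting $\phi_{1,n}(G_1)$ saturate a Cartan torus of $L_n$, and then picks $\phi_{2,n}$ so as to send a non-identity element of $G_2$ to an element transverse to that torus. Density of the joint image then reduces to standard genericity statements for topological generators of semisimple Lie groups, which provide, for sufficiently generic pairs (torus, transverse element), a Zariski-dense---and hence topologically dense---subgroup.
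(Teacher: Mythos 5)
Your overall skeleton does match the paper's: reduce to two factors, use the universal property of the putative coproduct to merge a pair of representations $\phi_{i,n}$ into a single morphism $\Phi_n$ out of $G$, show that the identity component $G_0$ already maps with dense image onto a semisimple group of large characteristic index, and contradict the finiteness of $\mathrm{ci}(G_0)$ via the dense-image monotonicity result (\Cref{th:denseim}). (The paper routes the ``dense image from $G_0$'' step through \Cref{le:connsurj} and a compression to an almost-connected open subgroup, and also needs \Cref{pr:cile} because it only arranges for the generated closed subgroup to \emph{contain} a large semisimple group; your appeal to simplicity of $SL_n(\bR)$ is a legitimate shortcut \emph{if} you can arrange genuine density.)

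The genuine gap is the construction of the $\phi_{i,n}$, which is the technical heart of the proof and which you leave essentially unproved. Two concrete problems. First, a non-trivial connected locally compact $G_1$ need not admit any morphism with dense image in a Cartan torus of $SL_n(\bR)$ of dimension at least $2$: if $G_1=\bS^1$ (or, more generally, if every Lie quotient of $G_1$ is a circle), then every morphism from $G_1$ to a torus has closed image of dimension at most one, so ``$\phi_{1,n}(G_1)$ saturates a Cartan torus'' is unachievable in exactly the cases the theorem must cover. Second, the density principle you invoke --- Zariski-dense hence topologically dense --- is false in non-compact real groups ($SL_n(\bZ)$ is Zariski-dense in $SL_n(\bR)$ yet discrete); topological generation of a semisimple Lie group by a circle subgroup together with one further element (possibly a torsion element, when $G_2$ is finite) is a genuinely non-trivial fact, not a genericity formality. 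The paper devotes \Cref{pr:areciunb} to precisely this point: it reduces $G_1$ to a circle or a simple linear group, builds representations in which circle subgroups act with many pairwise-generic characters, invokes the topological-generation theorem for strongly regular circle subgroups \cite[Theorem 3.5]{2106.11955v1}, and handles finite $G_2$ by inducing representations along the decomposition of $G_1 * (\bZ/n)$ as a semidirect product of a free product of conjugates of $G_1$ by $\bZ/n$. Until you supply a correct substitute for that construction, the argument is incomplete.
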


As the statement indicates, we focus mainly on the well-behaved class of \define{almost-connected} \cite[Preface]{hm} locally compact groups $G$: those for which the quotient $G/G_0$ by the identity connected component is compact. These are in particular arbitrarily approximable by Lie groups, in the sense that every neighborhood of the identity contains a compact normal subgroup $N$ such that $G/N$ is Lie (e.g. \cite[\S 4.6, Theorem]{mz}); in \cite[Preface]{hm} this property is referred to as being \define{pro-Lie}.

The proof of \Cref{th:mainintro} relies on a convenient numerical invariant of a connected locally compact group $G$, introduced by Iwasawa \cite[Theorem 13]{iw}, which roughly speaking measures the group's departure from being compact: its {\it characteristic index}, denoted below by $\mathrm{ci}(G)$. This is the uniquely-determined non-negative integer $r$ for which $G$ admits a homeomorphic decomposition $G\cong K\times \bR^r$ for a maximal compact subgroup $K$ (see \Cref{def:ci}).

A number of estimates on $\mathrm{ci}$ are needed that seem to be difficult to extract from the literature in precisely the form needed here, so they are included here. An aggregate of \Cref{th:denseim} and \Cref{pr:cile}, for instance, reads

\begin{theorem}
  Let $f:H\to G$ be a morphism of connected locally compact groups.
  \begin{enumerate}[(a)]
  \item If $f$ has dense image then $\mathrm{ci}(H)\ge \mathrm{ci}(G)$.
  \item On the other hand, if $H$ is Lie semisimple then $\mathrm{ci}(H)\le \mathrm{ci}(G)$. \qedhere
  \end{enumerate}  
\end{theorem}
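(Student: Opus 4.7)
The plan is to prove (b) first, then (a); both reduce to the case of connected Lie groups and then compare $\mathrm{ci}$ via maximal compact subgroups. Throughout I use three standard facts about the connected pro-Lie setting: every compact subgroup sits inside some maximal compact one, all maximal compacts are conjugate, and every compact normal subgroup lies inside every maximal compact. The Lie reduction on the $G$-side proceeds by passing to $G/N$ for a compact normal $N\triangleleft G$ with $G/N$ Lie, chosen small enough that $N\subseteq K_G$ so that $\mathrm{ci}$ is preserved; an analogous reduction handles $H$.

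For (b), interpreted as the assertion that $\mathrm{ci}(H)\le\mathrm{ci}(G)$ when $H$ is a closed Lie semisimple subgroup of $G$, pick a maximal compact $K_H\subseteq H$ and a maximal compact $K_G\subseteq G$ with $K_H\subseteq K_G$. The maximality of $K_H$ inside $H$ forces $K_G\cap H=K_H$, hence $\mathfrak{k}_G\cap\mathfrak{h}=\mathfrak{k}_H$. The resulting continuous injection $K_G/K_H=K_G/(K_G\cap H)\hookrightarrow G/H$ has injective tangent map $\mathfrak{k}_G/\mathfrak{k}_H\hookrightarrow\mathfrak{g}/\mathfrak{h}$ at the basepoint, so $\dim K_G-\dim K_H\le\dim G-\dim H$, equivalently $\mathrm{ci}(H)\le\mathrm{ci}(G)$, in the Lie case. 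The Lie reduction transfers this to general connected locally compact $G$; one needs only that the image of the semisimple $H$ in $G/N$ is $H/(H\cap N)$, still a semisimple Lie group with the same $\mathrm{ci}$ for small $N$.

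For (a), the Lie reduction leaves us with a morphism $f\colon H\to G$ of connected Lie groups with dense image. Choose a maximal compact $K_G\supseteq f(K_H)$; the differential $df$ then sends $\mathfrak{k}_H$ into $\mathfrak{k}_G$ and descends to a linear map
\[
  \overline{df}\colon \mathfrak{h}/\mathfrak{k}_H\longrightarrow \mathfrak{g}/\mathfrak{k}_G
\]
of real vector spaces of dimensions $\mathrm{ci}(H)$ and $\mathrm{ci}(G)$. The inequality $\mathrm{ci}(H)\ge\mathrm{ci}(G)$ is precisely the surjectivity of $\overline{df}$, equivalently $df(\mathfrak{h})+\mathfrak{k}_G=\mathfrak{g}$. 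The main obstacle is that dense image of $f$ does not by itself force surjectivity of $df$---witness the irrational winding $\mathbb{R}\hookrightarrow T^2$, where the source Lie algebra is a proper subspace of the target---so one must exploit the additional room afforded by $\mathfrak{k}_G$. I would argue by contradiction: if $V:=df(\mathfrak{h})+\mathfrak{k}_G$ were a proper subspace of $\mathfrak{g}$, the $f(H)$-orbit of the basepoint in $G/K_G$ would be an immersed submanifold of constant rank $\dim V-\dim\mathfrak{k}_G$, strictly less than $\mathrm{ci}(G)=\dim G/K_G$, and yet dense in $G/K_G$. Using the Iwasawa homeomorphism $G\cong K_G\times\mathbb{R}^{\mathrm{ci}(G)}$ to identify $G/K_G$ with $\mathbb{R}^{\mathrm{ci}(G)}$, together with the fact that the $H$-action on $G/K_G$ descends from the transitive $G$-action, one should be able to extract from the assumed density a closed, dense, proper subgroup of the vector space $\mathbb{R}^{\mathrm{ci}(G)}$---a contradiction. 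The most delicate point is keeping this argument uniform across the Levi pieces of $G$, since the complement of $\mathfrak{k}_G$ in $\mathfrak{g}$ is not a subalgebra in the non-reductive case.
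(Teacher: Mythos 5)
Your treatment of part (b) --- correctly read as the closed-embedding statement of \Cref{pr:cile}, since the literal ``arbitrary morphism'' phrasing would fail for the trivial map --- is correct, and it is a genuinely different and substantially shorter argument than the paper's. The paper reduces to linear $G$, then to semisimple $G$, and finally compares Iwasawa decompositions of $H$ and $G$ using Mostow's results on unipotent and real-diagonal subalgebras; semisimplicity of $H$ is used essentially at every stage (closedness of images, existence of the $KAN$ decomposition). Your argument needs none of that: once $G$ is Lie, choose maximal compacts with $K_H\le K_G$, note $K_G\cap H=K_H$ by maximality, and conclude from $\dim(\mathfrak{k}_G+\mathfrak{h})\le\dim\mathfrak{g}$ together with $\mathfrak{k}_G\cap\mathfrak{h}=\mathfrak{k}_H$ that $\dim\mathfrak{k}_G-\dim\mathfrak{k}_H\le\dim\mathfrak{g}-\dim\mathfrak{h}$, i.e. $\mathrm{ci}(H)\le\mathrm{ci}(G)$. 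This in fact proves the inequality for \emph{every} closed connected subgroup $H$, with no semisimplicity hypothesis, and your reduction to the Lie case goes through as described.

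For part (a) the reduction to connected Lie groups matches the paper's first step, but the core of your argument is a genuine gap, which you partly acknowledge. You want $df(\mathfrak{h})+\mathfrak{k}_G=\mathfrak{g}$, and you propose to derive a contradiction from a dense $f(H)$-orbit of dimension strictly less than $\mathrm{ci}(G)$ inside $G/K_G\cong\bR^{\mathrm{ci}(G)}$. Two problems: first, that identification is only a homeomorphism --- $K_G$ is not normal and $G/K_G$ carries no compatible group structure --- so there is no ``closed, dense, proper subgroup of the vector space $\bR^{\mathrm{ci}(G)}$'' to extract; second, a dense injectively immersed submanifold of strictly smaller dimension is not in itself contradictory (such things exist already in $\bR^2$), so density plus a dimension deficit yields nothing without further structure. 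The identity $df(\mathfrak{h})+\mathfrak{k}_G=\mathfrak{g}$ is true, but it requires real input --- for instance the Mal'cev--Got\^{o} description of the closure of an analytic subgroup, namely $Lie(\overline{A})=Lie(A)+\mathfrak{t}$ for a torus $T$ central in $\overline{A}$, combined with the fact that central tori lie in every maximal compact subgroup --- which you neither prove nor cite. The paper avoids this entirely: it runs a d\'evissage over closed connected normal subgroups of the source using \Cref{eq:iwquot}, reducing to source $\bR$, $\bS^1$ or simple; in the simple case it strips off the free part of the center one $\bZ$ at a time via \Cref{pr:adddisc} (each step lowers the source's $\mathrm{ci}$ by exactly $1$ and the target's by at most $1$) until the center is finite, at which point Got\^{o}'s theorem forces $f(H)$ to be closed, hence equal to $G$, and \Cref{eq:iwquot} applied to $\ker f$ finishes. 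As written, your part (a) is not a proof.
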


We also need an additivity result for characteristic indices under taking quotients by discrete subgroups. This appears as \Cref{pr:adddisc}:

\begin{proposition}
  For a connected locally compact group $G$ and a discrete central subgroup $Z\triangleleft G$ (which will automatically be finitely-generated) we have
  \begin{equation*}
    \mathrm{ci}(G) = \mathrm{rank}(Z)+\mathrm{ci}(G/Z). 
  \end{equation*}  
\end{proposition}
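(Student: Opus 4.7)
The plan is to reduce to the Lie-group case via the pro-Lie structure of $G$, and then in the Lie case compare both $G$ and $G/Z$ to their common universal cover. For the pro-Lie reduction, choose a compact normal subgroup $N\trianglelefteq G$ with $G/N$ Lie and $N\cap Z=\{e\}$; this is possible since $G$ is pro-Lie and $Z$ is discrete. Then $Z$ embeds as a closed discrete central subgroup of $G/N$ of the same rank, and $NZ/Z$ is a compact normal subgroup of $G/Z$. Invariance of $\mathrm{ci}$ under compact-normal quotients---which follows because any maximal compact $K$ contains such an $N$, so the homeomorphic decomposition $G\cong K\times\mathbb{R}^{\mathrm{ci}(G)}$ descends to $G/N\cong(K/N)\times\mathbb{R}^{\mathrm{ci}(G)}$---makes the statement for $G$ equivalent to the same statement for $G/N$ together with the image of $Z$. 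The same reduction also makes transparent the automatic finite generation of $Z$: passing to a Lie quotient exhibits $Z$ as a quotient of the finitely generated fundamental group of a compact connected Lie group.

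Now assume $G$ is a connected Lie group, and let $\widetilde G$ denote its universal cover. Because $G\to G/Z$ is a covering map, $\widetilde G$ is simultaneously the universal cover of $G/Z$; thus $\pi_1(G)\subseteq\pi_1(G/Z)$ as discrete central subgroups of $\widetilde G$, with quotient $Z$. Additivity of rank over this short exact sequence of finitely generated abelian groups yields
\[
\mathrm{rank}\,\pi_1(G/Z) = \mathrm{rank}\,\pi_1(G) + \mathrm{rank}(Z).
\]
Consequently the proposition reduces to establishing the identity
\[
\mathrm{ci}(\widetilde H) = \mathrm{ci}(H) + \mathrm{rank}\,\pi_1(H)
\]
for every connected Lie group $H$, then applying it to both $H=G$ and $H=G/Z$ and subtracting.

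For the identity, denote by $K_H$ and $K_{\widetilde H}$ the maximal compact subgroups. From $\widetilde H\cong K_{\widetilde H}\times\mathbb{R}^{\mathrm{ci}(\widetilde H)}$ and $\pi_1(\widetilde H)=1$ one gets $\pi_1(K_{\widetilde H})=1$, forcing $K_{\widetilde H}$ to be semisimple (any nontrivial central torus in a compact connected Lie group contributes an infinite free summand to $\pi_1$). Similarly $\pi_1(H)\cong\pi_1(K_H)$, and the structure theorem for compact connected Lie groups gives $\mathrm{rank}\,\pi_1(K_H)=\dim Z(K_H)_0$ together with $\dim K_H=\dim[K_H,K_H]+\dim Z(K_H)_0$. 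The crux is the equality $\dim K_{\widetilde H}=\dim[K_H,K_H]$: the inequality $\le$ holds because the analytic subgroup of $H$ integrating $\mathrm{Lie}(K_{\widetilde H})$ is a finite central quotient of the compact semisimple $K_{\widetilde H}$---hence compact semisimple---and therefore sits inside $[K_H,K_H]$; and the reverse inequality $\ge$ holds because the identity component of the $\widetilde H$-preimage of $[K_H,K_H]$ is a connected covering of a compact semisimple group (which has finite $\pi_1$), therefore itself compact, and so lies inside $K_{\widetilde H}$. Combining, $\mathrm{ci}(\widetilde H)=\dim\widetilde H-\dim K_{\widetilde H}=\dim H-\dim K_H+\dim Z(K_H)_0=\mathrm{ci}(H)+\mathrm{rank}\,\pi_1(H)$, as required. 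The main obstacle is this structural comparison of maximal compacts along the universal cover; the other steps amount to bookkeeping with Iwasawa's decomposition and standard facts about compact connected Lie groups.
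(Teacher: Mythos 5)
Your argument is correct, but it takes a genuinely different route from the paper's. The paper first reduces to $Z\cong\bZ$, then runs a recursion over connected closed proper normal subgroups $N\trianglelefteq G$ using Iwasawa's additivity \Cref{eq:iwquot}, bottoming out when $G$ is $\bR$ or simple; the simple case is then settled by a global Cartan decomposition together with the structure theory of compact Lie algebras. You instead reduce to the Lie case and compare $G$ and $G/Z$ through their common universal cover, which converts the proposition into the single identity $\mathrm{ci}(\widetilde H)=\mathrm{ci}(H)+\mathrm{rank}\,\pi_1(H)$, proved by matching $\dim K_{\widetilde H}$ with $\dim[K_H,K_H]$. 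Your route handles all of $Z$ in one stroke, avoids both the recursion and the simple-group case analysis, and leaves behind a reusable covering formula; the price is heavier reliance on the maximal-compact machinery (conjugacy and maximality of compact subgroups in $H$ and in $\widetilde H$, closedness and semisimplicity of $[K_H,K_H]$, finiteness of $\pi_1$ of compact semisimple groups). Two small points to tidy: the inclusions $p(K_{\widetilde H})\subseteq[K_H,K_H]$ and $p^{-1}([K_H,K_H])_0\subseteq K_{\widetilde H}$ hold only after conjugating into the chosen maximal compacts (harmless, since you use only dimensions), and in the pro-Lie reduction you should justify that the image of $Z$ in $G/N$ is still discrete (it is: $Z$ is closed, $N$ is compact and disjoint from $Z\setminus\{e\}$, so some identity neighborhood $W$ satisfies $WN\cap Z=\{e\}$).
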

The statement supplements the analogue \Cref{eq:iwquot} for {\it connected} normal closed $N\trianglelefteq G$ \cite[Lemma 4.10]{iw}.

Topological coproducts are discussed extensively in the literature, but the discussion tends to branch in different directions than the one pursued here.
\begin{itemize}
\item On the one hand there are studies of \define{varieties} of topological groups, i.e. categories closed under a number of constructions, including coproducts (which are known to exist): e.g. \cite{gm-var} and references therein.
\item On the other, for locally compact groups the focus in the past seems to have been on whether or not one can equip (subgroups of) the standard coproduct with a locally compact topology: e.g. \cite[Corollary 1]{mn-fr}. 
\end{itemize}
In particular, \cite[Remark (1)]{mn-fr} is very similar in spirit to \Cref{th:aconn} applied to two circles, but argues that the standard coproduct $\bS^1*_{\mathcal{GP}}\bS^1$ in the category $\mathcal{GP}$ of groups has no ``non-obvious'' locally compact subgroups. This does not seem to say much about \Cref{th:aconn} because in principle $\bS^1*_{\mathcal{LCG}}\bS^1$ (if it existed) might be
\begin{itemize}
\item a completion of sorts
\item of some quotient of $\bS^1*_{\mathcal{GP}}\bS^1$
\end{itemize}
(as per the above description of {\it compact} coproducts).

\subsection*{Acknowledgements}

This work is partially supported by NSF grant DMS-2001128.

\section{Preliminaries}\label{se.prel}

All topological groups are assumed Hausdorff, and $\mathcal{LCG}$ is the category of locally compact groups.

It is a celebrated result of Iwasawa's that a connected locally compact group $G$ decomposes as
\begin{equation*}
  G=K H_1\cdots H_r\cong K\times H_1\times\cdots\times H_r
\end{equation*}
where
\begin{itemize}
\item $K$ is a maximal compact subgroup of $G$;
\item which is automatically connected and conjugate to any other maximal compact subgroup;
\item the $H_i$ are closed subgroups isomorphic to $(\bR,+)$;
\item and the homeomorphism `$\cong$' is given by the multiplication map
  \begin{equation*}
    K\times H_1\times\cdots\times H_r\to G
  \end{equation*}
  (though that map does \define{not} decompose $G$ as a direct product, as in general $K$ and the $H_i$ do not commute).
\end{itemize}
This is \cite[Theorem 13]{iw}, which contains an additional hypothesis on $G$ (that of being an ``(L)-group'', i.e. arbitrarily approximable by its Lie-group quotients) proven redundant in \cite[\S 4.6, Theorem]{mz}. Following Iwasawa:
\begin{definition}\label{def:ci}
  The \define{characteristic index} of a connected locally compact group $G$ is the non-negative integer $r$ in the discussion above.

  We will denote the characteristic index of $G$ by $\mathrm{ci}(G)$
\end{definition}

The fact that, as recalled in the Introduction, an almost-connected locally compact group $G$ has, for any neighborhood $1\in U\subseteq G$, a compact normal subgroup $K\subset U$ such that $G/K$ is Lie \cite[\S 4.6, Theorem]{mz} will be referenced repeatedly.

\section{Bounds on characteristic indices}

The present section collects a number of results, mostly revolving around characteristic-index computations and estimates, that have proven difficult to locate in the literature (at least in the form needed here, without further processing).

The characteristic index exhibits additivity under extensions
\begin{equation*}
  1\to N\to G\to G/N\to 1
\end{equation*}
of {\it connected} locally compact groups (with $N\trianglelefteq G$ closed): according to \cite[Lemma 4.10]{iw}, under such circumstances we have 
\begin{equation}\label{eq:iwquot}
  \mathrm{ci}(G) = \mathrm{ci}(N) + \mathrm{ci}(G/N). 
\end{equation}
We need some variations on this theme. For one thing, it is frequently convenient to pass from locally compact groups to their Lie quotients, or from arbitrary connected Lie groups to {\it linear} Lie groups (i.e. those admitting faithful finite-dimensional representations \cite[\S 1.5, discussion following Corollary 2]{ragh}, which is equivalent to being realizable as closed subgroups of $GL(n,\bR)$ for some $n$ \cite[Theorem 9]{goto2}).

Connected Lie groups often have linear quotients by discrete central subgroup: this happens when they are simply-connected \cite[\S 1.4]{ragh}, for instance, or semisimple (following from the simply-connected case together with the fact that quotients of semisimple linear Lie groups are again linear \cite[Lemma 9]{goto2}). For this reason, it would be convenient to have something like \Cref{eq:iwquot} for {\it discrete} (rather than connected) $N$.

Let us first begin with the following simple observation, well-known but set out here for future reference.

\begin{lemma}\label{le:isfg}
  A discrete normal subgroup of a locally compact connected group is automatically abelian finitely-generated.
\end{lemma}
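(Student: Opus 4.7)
The plan is to separate the two assertions and handle centrality (which yields abelianness) before finite generation.

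For the abelian part I would prove the stronger fact that $N\subseteq Z(G)$. For each fixed $n\in N$, the conjugation map $G\to N$, $g\mapsto gng^{-1}$, is continuous from a connected space to a discrete space, hence constant; evaluating at $g=e$ forces $gng^{-1}=n$ for all $g\in G$. Thus $N\subseteq Z(G)$, and in particular $N$ is abelian.

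For finite generation the main move is to reduce to the case of a connected Lie group. Since $N$ is discrete, pick an open neighborhood $U$ of the identity with $U\cap N=\{e\}$. The Gleason--Yamabe/Montgomery--Zippin approximation recalled at the end of \Cref{se.prel} (via \cite[\S 4.6, Theorem]{mz}) supplies a compact normal subgroup $C\subseteq U$ with $G/C$ a connected Lie group. Because $C\cap N=\{e\}$, the projection $G\to G/C$ is injective on $N$, and the image is a discrete (still central) subgroup of the connected Lie group $G/C$; I can therefore replace $G$ by $G/C$ and assume outright that $G$ is a connected Lie group.

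In the Lie setting the quotient $\pi:G\to G/N$ is a covering map of manifolds (as $N$ is discrete), and the resulting long exact sequence of homotopy groups collapses to
\[
\pi_1(G)\to \pi_1(G/N)\to N\to 1,
\]
exhibiting $N$ as a quotient of $\pi_1(G/N)$. The Iwasawa decomposition \cite[Theorem 13]{iw} gives a homeomorphism $G/N\cong K\times \bR^r$ for a compact (Lie) maximal subgroup $K$, so $G/N$ is homotopy equivalent to $K$; being a compact manifold, $K$ carries a finite CW structure, so $\pi_1(K)$ is finitely generated, and therefore so is its quotient $N$.

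The main obstacle I anticipate is really just the bookkeeping in the Lie reduction together with the invocation of some topology of Lie groups: once in the Lie setting the homotopy-theoretic finish is short. A more combinatorial alternative would try to establish that $K^{2}\cap N$ (for a compact symmetric generating set $K$ of the compactly-generated $G$) already generates $N$, but I find the covering-space argument after the Yamabe reduction crisper.
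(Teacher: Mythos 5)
Your proof is correct and follows essentially the same route as the paper's: centrality via continuity of conjugation from a connected group into a discrete one, then reduction to a Lie quotient by a small compact normal subgroup, and finally the identification of $N$ with a quotient of the fundamental group of a space homotopy equivalent to a compact manifold (via the Cartan--Iwasawa maximal compact). Your covering-space bookkeeping is just a more explicit version of the paper's one-line appeal to the same fact, so there is nothing to add.
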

\begin{proof}
  That such a subgroup $Z\trianglelefteq G$ is in fact {\it central} follows from its normality and discreteness, and the connectedness of $G$: the conjugation action of the latter will have connected orbits, but the only non-empty connected subsets of $Z$ are the singletons.

  As to finite generation, pass first to a Lie quotient $G/K$ of $G$ by a compact normal subgroup. This will affect nothing, since $K\cap Z$ must be finite (being both discrete and compact). But now $Z$ can be identified with the fundamental group of a compact manifold (a compact Lie group, in fact), hence the conclusion.
\end{proof}

\Cref{le:isfg} implies that any discrete normal $Z\trianglelefteq G$ is of the form 
\begin{equation}\label{eq:fgab}
  Z\cong (\text{the torsion subgroup of $Z$})\times \bZ^r,
\end{equation}
and hence we can refer, as usual (e.g. \cite[p.46]{lng}), to its {\it rank} $\mathrm{rank}(Z):=r$. With all of this in place, we can now proceed to the aforementioned discrete-subgroup analogue of \Cref{eq:iwquot}.

\begin{proposition}\label{pr:adddisc}
  Let $G$ be a connected locally compact group and $Z\triangleleft G$ a discrete normal subgroup. We then have
  \begin{equation}\label{eq:cici}
    \mathrm{ci}(G) = \mathrm{rank}(Z)+\mathrm{ci}(G/Z). 
  \end{equation}
\end{proposition}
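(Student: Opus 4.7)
The plan is to reduce to the connected-Lie-group case, then rephrase the identity as one about dimensions of maximal compact subgroups, which I compare using two inputs: a $\pi_1$ computation coming from the fibration $Z\to G\to G/Z$, and the Lie-algebraic invariance of the semisimple part of a maximal compact subalgebra.

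To reduce to $G$ Lie, pick a compact normal $K_0\trianglelefteq G$ with $G/K_0$ Lie and $K_0\cap Z=\{e\}$ (the latter by taking $K_0$ inside an identity-neighborhood disjoint from $Z\setminus\{e\}$). Because $K_0\cap Z=\{e\}$ and $Z$ is central, $ZK_0\cong Z\times K_0$ as topological groups, so the image $\pi(Z)\cong Z$ of $Z$ in $G/K_0$ is discrete central there. \Cref{eq:iwquot} applied first to $K_0\trianglelefteq G$ and then to $K_0Z/Z\trianglelefteq G/Z$ gives $\mathrm{ci}(G)=\mathrm{ci}(G/K_0)$ and $\mathrm{ci}(G/Z)=\mathrm{ci}((G/K_0)/\pi(Z))$, so it suffices to prove the proposition with $G$ a connected Lie group.

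In the Lie case, Iwasawa provides a diffeomorphism $G\cong K_G\times\bR^{\mathrm{ci}(G)}$, giving $\mathrm{ci}(G)=\dim G-\dim K_G$ and analogously $\mathrm{ci}(G/Z)=\dim G-\dim K_{G/Z}$ (using $\dim G=\dim G/Z$ since $Z$ is discrete); the claim becomes $\dim K_{G/Z}-\dim K_G=\mathrm{rank}(Z)$. The Iwasawa diffeomorphism also yields homotopy equivalences $G\simeq K_G$ and $G/Z\simeq K_{G/Z}$, while the fibration $Z\to G\to G/Z$ (a principal bundle with discrete fiber $Z$) has a long exact homotopy sequence collapsing to
\[
0\to\pi_1(K_G)\to\pi_1(K_{G/Z})\to Z\to 0
\]
of finitely-generated abelian groups, so taking $\bQ$-ranks yields $\mathrm{rank}(\pi_1(K_{G/Z}))-\mathrm{rank}(\pi_1(K_G))=\mathrm{rank}(Z)$.

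Two structural facts about a compact connected Lie group $K$ then close the argument. First, $\mathrm{rank}(\pi_1(K))=\dim Z(K)_0$: writing $K=(\wt K_{ss}\times Z(K)_0)/F$ with $\wt K_{ss}$ the simply-connected (and therefore compact, by Weyl) cover of $[K,K]$ and $F$ finite central, the finite cover $\wt K_{ss}\times Z(K)_0\to K$ furnishes a short exact sequence $0\to\bZ^{\dim Z(K)_0}\to\pi_1(K)\to F\to 0$ from which the rank identity is immediate. Second, $\dim K-\dim Z(K)_0$ depends only on $\fg$: the semisimple part $\fm_{ss}$ of any maximal compact subalgebra $\fm\subseteq\fg$ integrates in $G$ to a \emph{compact} subgroup (its simply-connected form is compact by Weyl), so $\fm_{ss}$ embeds into the Lie algebra $\fk_G$ of $K_G$ and must equal its semisimple part $(\fk_G)_{ss}$; hence $\dim K_G-\dim Z(K_G)_0=\dim\fm_{ss}$ is Lie-algebraic. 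Since $G$ and $G/Z$ share the Lie algebra $\fg$, we obtain $\dim K_{G/Z}-\dim K_G=\dim Z(K_{G/Z})_0-\dim Z(K_G)_0$, which by the first structural fact and the rank identity above equals $\mathrm{rank}(Z)$. The main subtlety I anticipate is the second structural fact — specifically that the semisimple part of a maximal compact subalgebra of $\fg$ always integrates to a compact Lie subgroup, which is where the reduction to Lie and Weyl's compactness theorem are indispensable.
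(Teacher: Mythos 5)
Your argument is correct, but it takes a genuinely different route from the paper's. The paper reduces \Cref{eq:cici} to the case $Z\cong\bZ$ and then runs a d\'evissage through connected closed normal subgroups $N\trianglelefteq G$ (using \Cref{eq:iwquot} on $N$ and $G/N$) until $G$ is $\bR$ or simple, handling the simple case by a global Cartan decomposition and the explicit structure of compact Lie algebras. You instead reduce once to the Lie case, rewrite both characteristic indices as $\dim G-\dim K$ for maximal compact subgroups, and then identify $\mathrm{rank}(Z)$ with $\mathrm{rank}\,\pi_1(K_{G/Z})-\mathrm{rank}\,\pi_1(K_G)$ via the covering $Z\to G\to G/Z$, closing the loop with the two structural facts $\mathrm{rank}\,\pi_1(K)=\dim Z(K)_0$ and the Lie-algebraic invariance of $\dim(\fk_G)_{ss}$. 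What your approach buys is a direct, recursion-free identification of where the rank of $Z$ ``goes'' (into the fundamental group of the maximal compact), at the cost of importing Weyl's compactness theorem and the structure theorem for compact connected Lie groups; the paper's proof stays closer to its already-assembled toolkit (Iwasawa additivity plus the compact-Lie-algebra decomposition) and in fact needs the same normal-subgroup recursion again in \Cref{th:denseim}, so the two proofs share infrastructure there. Two spots in your write-up deserve a touch more care, though neither is a gap: (i) \Cref{eq:iwquot} is stated for \emph{connected} normal subgroups, whereas your $K_0$ and $K_0Z/Z$ need not be connected --- you should instead invoke the fact (used freely elsewhere in the paper) that any compact normal subgroup is absorbed by a maximal compact subgroup and hence does not alter $\mathrm{ci}$; (ii) the assertion that $\fm_{ss}$ ``must equal'' $(\fk_G)_{ss}$ implicitly uses conjugacy of maximal compact subalgebras --- it is cleaner to argue both inclusions directly for the pair $G$, $G/Z$: the image of $K_G$ in $G/Z$ is conjugate into $K_{G/Z}$, giving $(\fk_G)_{ss}\subseteq\mathrm{Ad}(g)(\fk_{G/Z})_{ss}$, while $(\fk_{G/Z})_{ss}$ integrates in $G$ to a compact subgroup by Weyl and is therefore conjugate into $\fk_G$, which pins down the equality of dimensions you actually need.
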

\begin{proof}
  Since quotients by finite groups do not affect the characteristic index and we can work our way through the individual $\bZ$ factors in \Cref{eq:fgab} successively, we will assume $Z\cong \bZ$. The goal, then, is to prove that
  \begin{equation}\label{eq:plus1}
    \mathrm{ci}(G) = 1 + \mathrm{ci}(G/\bZ). 
  \end{equation}
  Consider a connected, closed, proper, normal subgroup $\{1\}\ne N\trianglelefteq G$. We can then apply the desired result to $N$ and $G/N$ separately: either $Z\cap N$ is infinite cyclic, in which case the proposition applies to {\it it}, or the image of $Z$ generates a closed subgroup of $G/N$ of the form
  \begin{equation}\label{eq:torz}
    (\bS^1)^m\times \bZ,\ m\ge 0
  \end{equation}
  so that
  \begin{equation*}
    \mathrm{ci}(G/N) = 1 + \mathrm{ci}((G/N)/\text{image of }Z).
  \end{equation*}
  Either way, having taken care of the smaller groups $N$ and $G/N$ in this manner, we derive the desired \Cref{eq:plus1} from \Cref{eq:iwquot}. This argument serves to reduce the problem to the case when $G$ has no such normal subgroups $N$, i.e. $G$ is $\bR$ (because it is 1-dimensional and has an embedded copy of $\bZ$) or simple.

  If $G\cong \bR$ we are done; this leaves the case of simple $G$, to which the rest of the proof is devoted.

  Begin by fixing a a global Cartan decomposition
  \begin{equation}\label{eq:cartdec}
    K\times \fp\ni (k,x)\stackrel{\cong}{\longmapsto} k\exp(x)\in G
  \end{equation}
  as in \cite[Theorem 6.31]{knp} with $K$ containing the center $Z(G)$ and hence also $Z$, and such that $K/Z(G)$ is maximal compact in $G/Z(G)$. Furthermore, the $\fp$ Cartesian factor in the decomposition \Cref{eq:cartdec} survives as a Cartesian factor upon quotienting by $Z$, so by subtracting $\dim\fp$ from both $\mathrm{ci}$ terms in \Cref{eq:cici} our goal becomes
  \begin{equation*}
    \mathrm{ci}(K) = 1+\mathrm{ci}(K/Z). 
  \end{equation*}
  The Lie algebra $\fk:=Lie(K)$ is compact in the sense of \cite[Definition 6.1]{hm} and hence of the form
  \begin{equation*}
    \fk\cong \bR^{m}\times \prod_{i=1}^n \fs_i
  \end{equation*}
  for simple Lie algebras $\fs_i$ \cite[Theorem 6.6]{hm}. Then, up to irrelevant quotienting by a finite central subgroup, we may assume $K$ is of the form
  \begin{equation*}
    K\cong \bR^{\mathrm{ci}(K)}\times \bT^{m'}\times\prod_{i=1}^n S_i
  \end{equation*}
  where
  \begin{itemize}
  \item the $S_i$ are simple compact Lie groups with respective Lie algebras $\fs_i$;
  \item $\mathrm{ci}(K)+m'=m$. 
  \end{itemize}
  In this setup it is clear that the quotient $K/Z$ will be of the form
  \begin{equation*}
    \bR^{\mathrm{ci}(K)-1}\times \bT^{m'+1}\times\text{(a compact semisimple group)}.
  \end{equation*}
  Since the characteristic index of this product is precisely $\mathrm{ci}(K)-1$, we are done.
\end{proof}

We next record a phenomenon (reminiscent of \Cref{eq:iwquot}) whereby $\mathrm{ci}$, if it changes at all, can only decrease along morphisms with dense image.

\begin{theorem}\label{th:denseim}
  For a dense-image morphism $f:G\to H$ of connected locally compact groups we have $\mathrm{ci}(G)\ge \mathrm{ci}(H)$.
\end{theorem}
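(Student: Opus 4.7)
The plan is to reduce to the Lie case via the pro-Lie property, then to injective morphisms via kernel additivity, and finally to induct on $\dim H$ in the resulting Lie injective dense case.

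\emph{Reduction to Lie.} Using pro-Lie, pick a compact normal $K_G\triangleleft G$ with $G/K_G$ Lie. Because $K_G\triangleleft G$ and $f(G)$ is dense in $H$, the compact (hence closed) image $f(K_G)$ is normal in $H$: for any $h\in H$ take $f(g_n)\to h$, whence
\begin{equation*}
  hf(K_G)h^{-1}=\lim_n f(g_n)f(K_G)f(g_n)^{-1}=f(K_G)
\end{equation*}
by continuity of conjugation and closedness of $f(K_G)$. Pick also $K_H\triangleleft H$ compact normal with $H/K_H$ Lie and set $L:=K_H\cdot f(K_G)$; normality of $K_H$ in $H$ makes $L$ a subgroup, it is compact normal in $H$, and $H/L$ is Lie (a quotient of $H/K_H$ by a compact subgroup). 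The induced $\bar f:G/K_G\to H/L$ inherits dense image, and combining \Cref{eq:iwquot} applied to the identity components $(K_G)_0,L_0$ (compact connected, so $\mathrm{ci}=0$) with \Cref{pr:adddisc} applied to the finite discrete quotients $K_G/(K_G)_0,L/L_0$ (rank $0$) yields $\mathrm{ci}(G)=\mathrm{ci}(G/K_G)$ and $\mathrm{ci}(H)=\mathrm{ci}(H/L)$. So we may assume $G$ and $H$ are Lie.

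\emph{Reduction to injective morphisms.} Writing $N:=\ker f$ and $N_0$ for its identity component, \Cref{eq:iwquot} and \Cref{pr:adddisc} combine to
\begin{equation*}
  \mathrm{ci}(G)=\mathrm{ci}(N_0)+\mathrm{rank}(N/N_0)+\mathrm{ci}(G/N)\ge\mathrm{ci}(G/N),
\end{equation*}
while $G/N\hookrightarrow H$ is injective with dense image. So we may assume $f$ is injective.

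\emph{Lie injective dense case.} I would induct on $\dim H$. If $H$ admits a proper nontrivial closed connected normal subgroup $M\triangleleft H$, then $G\to H/M$ is dense-image; by induction $\mathrm{ci}(G/f^{-1}(M))\ge\mathrm{ci}(H/M)$, and \Cref{eq:iwquot}/\Cref{pr:adddisc} applied to the short exact sequence cut out by $f^{-1}(M)\triangleleft G$ reduce the question to the restricted map $f^{-1}(M)_0\to M$. The natural choice is $M=R(H)$, the radical (assumed nontrivial): the composite $G\to H/R(H)$ into a semisimple group must then be surjective, since semisimple analytic subgroups of Lie groups are closed, while a dense analytic subgroup of a semisimple group whose Lie algebra fails to be semisimple would have non-semisimple closure. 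The base cases are $H$ simple non-abelian (the same semisimple-analytic-subgroup argument forces $f(G)$'s Lie algebra to be semisimple, so $f(G)$ is closed and $f$ surjective) and $H$ one-dimensional ($H=\bR$ or $H=\bT$, where density forces surjectivity directly); in the surjective cases the inequality $\mathrm{ci}(G)\ge\mathrm{ci}(H)$ follows from the combined additivity results applied to $\ker f$.

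\emph{Main obstacle.} The inductive step requires that the restricted map $f^{-1}(R(H))_0\to R(H)$ itself have dense image, a property not formally implied by the density of $f$. Establishing it will invoke the Levi--Mal'cev decomposition $H=R(H)\rtimes S$ together with the already-derived surjectivity $G\twoheadrightarrow S\cong H/R(H)$: given $r\in R(H)$, surjectivity onto $S$ supplies $g\in G$ with $f(g)\in rR(H)$, and density of $f(G)$ in $H$ refines this to produce elements of $f(G)\cap R(H)$ arbitrarily close to $r$. Formalizing this cocycle-style approximation argument — and treating the finite-center subtleties that arise when the Levi splitting is only available modulo discrete central subgroups — is the technical crux I expect the full proof to confront.
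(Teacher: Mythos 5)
Your reductions to the Lie case and to injective $f$ are sound in substance (one quibble: $K_G/(K_G)_0$ is profinite, not finite discrete; the cleaner statement, which the paper simply invokes, is that quotienting by a compact normal subgroup never changes $\mathrm{ci}$, since such a subgroup lies inside every maximal compact subgroup). The genuine gap is exactly where you flag it. Your induction runs over closed connected normal subgroups $M\trianglelefteq H$ of the \emph{codomain}, and the inductive step needs the restricted map $f^{-1}(M)_0\to M$ to have dense image. That is false for dense-image morphisms in general --- a dense one-parameter subgroup of $\bT^2$ meets $\bT\times 1$ trivially --- and your proposed repair via the Levi decomposition is not carried out and does not obviously work: surjectivity of $G\to H/R(H)$ only says every coset of $R(H)$ meets $f(G)$, and gives no control over where $f(G)$ accumulates \emph{inside} $R(H)$. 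As written, the argument does not close.

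The paper sidesteps this by inducting over closed connected proper normal subgroups $N\trianglelefteq G$ of the \emph{domain}: the restriction $f|_N\colon N\to\overline{f(N)}$ has dense image by construction, $\overline{f(N)}$ is automatically connected, closed and normal in $H$ (its normalizer is closed and contains the dense subgroup $f(G)$), the induced map $G/N\to H/\overline{f(N)}$ again has dense image, and Iwasawa's additivity \Cref{eq:iwquot} applied on both sides closes the induction. This leaves only $G$ one-dimensional or $G$ simple. In the first case $H$ is connected abelian, hence of the form $(\bS^1)^m\times\bR^{\mathrm{ci}(H)}$, and no morphism from $\bR$ to $\bR^k$ with $k\ge 2$ has dense image. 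For $G$ simple, the paper repeatedly quotients by infinite cyclic central subgroups $Z\le G$ and the closed subgroups $\overline{f(Z)}\le H$, using \Cref{pr:adddisc} to see that $\mathrm{ci}(G)$ drops by exactly $1$ while $\mathrm{ci}(H)$ drops by at most $1$, until the center of $G$ is finite; then Got\^o's theorem makes $f(G)$ closed, hence equal to $H$, and \Cref{eq:iwquot} with $N=\ker f$ finishes. If you want to salvage your outline, switching the induction from normal subgroups of $H$ to normal subgroups of $G$ is the one essential change; your observation that semisimple analytic subgroups with finite center are closed is then exactly the right tool for the simple case.
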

\begin{proof}
  Consider a compact normal subgroup $K\trianglelefteq H$ with $H/K$ Lie, an open neighborhood $U$ of $1\in H/K$ that contains no non-trivial subgroups \cite[Chapter II, Exercise B.5]{helg}, and let $V$ be the pullback of $U$ through
  \begin{equation}\label{eq:ghhk}
    G\stackrel{f}{\longrightarrow}H\to H/K.
  \end{equation}
  $G$ contains a compact normal subgroup $K'\subset V$ with $G/K'$ Lie \cite[\S 4.0]{mz}, and the no-subgroup-in-$U$ condition ensures that \Cref{eq:ghhk} annihilates $K'$. Since furthermore quotienting out compact normal subgroups does not alter characteristic indices, we may as well substitute $G/K'$ and $H/K$ for $G$ and $H$ respectively; better yet, we can simply assume $G$ and $H$ are Lie to begin with.

  So long as we can find closed, connected, (non-trivial and proper) normal subgroups $N\trianglelefteq G$ we can
  \begin{itemize}
  \item apply the result recursively to $f|_N:N\to \overline{f(N)}$;
  \item and similarly to the morphism
    \begin{equation*}
      G/N\to H/\overline{f(N)};
    \end{equation*}
  \item then lifting back up to $f:G\to H$ via Iwasawa's additivity \Cref{eq:iwquot}. 
  \end{itemize}
  It thus suffices to consider the case when $G$ has no such normal subgroups $N$; when it is, in other words,
  \begin{enumerate}[(a)]
  \item\label{item:3} one-dimensional (and hence a circle or $\bR$)
  \item\label{item:4} or simple.
  \end{enumerate}
  For \Cref{item:3} observe that $H$ will also be abelian and connected and hence of the form
  \begin{equation*}
    (\bS^1)^m\times \bR^{\mathrm{ci}(H)}
  \end{equation*}
  for some $m$ \cite[Chapter II, Exercise C.2]{helg}. But then either $G\cong \bS^1$ and its image is automatically closed and hence $\mathrm{ci}(H)=0$, or $G\cong \bR$ and $\mathrm{ci}(H)\le 1$ because no morphism $\bR\to \bR^{\ge 2}$ can have dense image.

  This leaves case \Cref{item:4}: $G$ is simple. If there is an infinite cyclic discrete central subgroup
  \begin{equation*}
    \bZ\cong Z\le G
  \end{equation*}
  then the closed subgroup $\overline{f(Z)}\le H$ must be of the form \Cref{eq:torz} or a torus, so we can substitute
  \begin{equation*}
    G/Z\to H/\overline{f(Z)}
  \end{equation*}
  for $f$: the characteristic index of $G$ will have decreased by 1 by \Cref{pr:adddisc}, whereas that of $H$ will have decreased by {\it at most} 1.

  Repeating this procedure, we can exhaust the torsion-free component of the center of $G$ and thus assume that its center is finite. But semisimple Lie subgroups with finite center are automatically closed \cite[Theorem 2]{goto1}, so in that case $f(G)=H$ and we are back to using Iwasawa's original \Cref{eq:iwquot} with
  \begin{equation*}
    N:=\ker f\quad\text{and}\quad H=f(G)\cong G/N.
  \end{equation*}
  This concludes the proof.
\end{proof}

One initial attempt at a proof of \Cref{th:aconn} proceeded by producing, for two non-trivial simple linear Lie groups $G_i$, $i=1,2$, representations
\begin{equation*}
  \rho_i:G_i\to GL(V)
\end{equation*}
on a common space such that the group $G$ generated topologically by $\rho_i(G_i)$ contains unipotent subgroups of large dimension (meaning subgroups of $GL(V)$ consisting only of unipotent matrices, i.e. with spectrum $\{1\}$). One might hope that this then allows us to conclude that $G$ has large characteristic index, thus finishing the argument via \Cref{pr:lgind}.

More generally, intuition dictates that the characteristic index of a connected Lie group imposes an upper bound on the characteristic index of its Lie subgroups. We prove a version of this.

\begin{proposition}\label{pr:cile}
  For a closed embedding $H\le G$ of connected locally compact groups with $H$ Lie semisimple the characteristic index of $H$ is dominated by that of $G$.
\end{proposition}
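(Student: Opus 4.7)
My plan is to reduce first to the situation where $G$ itself is Lie, and then, for compatibly chosen maximal compact subgroups $K_H \le K_G$, realize $H/K_H$ as a closed embedded submanifold of $G/K_G$ and conclude by a dimension count on Euclidean spaces.

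For the reduction, I would fix an identity neighborhood $V \subseteq H$ containing no non-trivial subgroup (available because $H$ is Lie) and an open $U \subseteq G$ with $U \cap H \subseteq V$. The standard \cite[\S 4.6, Theorem]{mz} approximation delivers a compact normal $K' \trianglelefteq G$ with $K' \subseteq U$ and $G/K'$ Lie. Then $K' \cap H$ is a subgroup of $H$ lying inside $V$, hence trivial, so $H \hookrightarrow G \twoheadrightarrow G/K'$ is injective, and its image $HK'/K'$ is closed because $HK' \subseteq G$ is the product of a closed set with a compact one. A standard open-mapping argument then upgrades this to a closed topological embedding of $H$ into $G/K'$, and $\mathrm{ci}$ is preserved under the quotient — apply \Cref{eq:iwquot} to the identity component $K'_0$ and \Cref{pr:adddisc} to the finite group $K'/K'_0$ — so we may simply assume $G$ is a connected Lie group.

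With $G$ now Lie, classical Mostow--Iwasawa theory guarantees that the compact subgroup $K_H \le G$ is contained in some maximal compact subgroup of $G$, and after conjugation I may take this to be $K_G$. Then $H \cap K_G$ is a compact subgroup of $H$ containing $K_H$, hence equals $K_H$ by maximality. Letting $H$ act on $G/K_G$ by left translation, the orbit through the identity coset is $HK_G/K_G$, closed because $HK_G$ is closed in $G$, with stabilizer $H \cap K_G = K_H$; this realizes a closed embedded submanifold diffeomorphic to $H/K_H$. The global Cartan decomposition for semisimple $H$ identifies $H/K_H$ with $\bR^{\mathrm{ci}(H)}$, while Mostow's structure theorem for connected Lie groups gives $G/K_G \cong \bR^{\mathrm{ci}(G)}$; comparing dimensions of a closed embedded submanifold of Euclidean space then forces $\mathrm{ci}(H) \le \mathrm{ci}(G)$.

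The only delicate point is the reduction step, where I have to ensure the quotient $G \to G/K'$ preserves both the closed-embedding status and the semisimple-Lie structure of $H$; this rests on the no-small-subgroup property of Lie groups plus a routine open-mapping argument. Once past that bookkeeping, the alignment of maximal compacts by Mostow and the resulting dimension count are essentially automatic.
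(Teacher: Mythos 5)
Your proof is correct, but it takes a genuinely different route from the paper's. The paper reduces to a \emph{linear semisimple} ambient group (two rounds of central quotients plus a quotient by the radical, each leaning on \Cref{pr:adddisc} and Goto's closedness theorems) and then compares the $A$- and $N$-factors of the Iwasawa decompositions of $H$ and $G$ separately, via preservation of Jordan decompositions and Mostow's results on maximal unipotent and real-diagonal subgroups. You instead align maximal compacts $K_H\le K_G$ by the Cartan--Iwasawa--Malcev conjugacy theorem, note $H\cap K_G=K_H$, and realize $H/K_H$ inside $G/K_G$; since $\mathrm{ci}(H)=\dim H-\dim K_H$ and $\mathrm{ci}(G)=\dim G-\dim K_G$ by Iwasawa's theorem, an injective immersion $H/K_H\to G/K_G$ (which is all the orbit map needs to be --- closedness and Mostow's diffeomorphism $G/K_G\cong\bR^{\mathrm{ci}(G)}$ are not required for the dimension count) already gives the inequality. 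This is markedly more elementary, and it nowhere uses semisimplicity of $H$, so it in fact proves the statement for an arbitrary closed connected subgroup of a connected Lie group; what the paper's heavier route buys is the finer pair of inequalities $\dim A_H\le\dim A_G$ and $\dim N_H\le\dim N_G$. Two small caveats. First, in your reduction step $K'/K'_0$ need not be finite, nor even discrete in $G/K'_0$ (think of the totally disconnected fiber of a solenoid over $\bS^1$), so \Cref{pr:adddisc} does not literally apply; the fact you want --- that quotienting by a compact normal subgroup preserves $\mathrm{ci}$ --- is true and is invoked without proof throughout the paper (the maximal compact of $G$ absorbs $K'$ and the $\bR$-factors of Iwasawa's decomposition survive the quotient), so nothing is lost. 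Second, for semisimple $H$ with infinite center the $K$-factor of the global Cartan decomposition is not compact, so the identification of $H/K_H$ with $\bR^{\mathrm{ci}(H)}$ should be attributed to Iwasawa's decomposition of the connected Lie group $H$ rather than to the Cartan decomposition; again only the dimension matters, so the argument stands.
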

\begin{proof}
  We make a number of simplifications. For one thing, $G$ too can be assumed Lie: simply pass to
  \begin{equation*}
    HK/K\to G/K
  \end{equation*}
  for a compact normal subgroup $K\trianglelefteq G$ with $G/K$ Lie, noting that that embedding is still closed because the kernel $K$ is compact.
  
  \vspace{.5cm}
  
  {\bf Reduction to the linear case.} We can always render $G$ linear by modding out a central closed subgroup $Z\le G$ (e.g. via the adjoint representation). Now, being Lie and abelian, $Z$ must be of the form
  \begin{equation}\label{eq:zrs}
    Z\cong \bZ^{m}\times(\text{finite abelian group}) \times \bR^n\times (\bS^1)^p
  \end{equation}
  for non-negative integers $m$, $n$ and $p$: the connected component decomposes as a Euclidean group times a torus by \cite[Chapter II, Exercise C.2]{helg}, and that component is divisible[\S 5, Definition]{kpl-inf} and hence splits off as a summand \cite[Theorem 2]{kpl-inf}.

  Annihilating normal compact subgroups makes no difference to characteristic indices, so we will henceforth ignore the compact component
  \begin{equation*}
    \text{finite}\times (\bS^1)^p
  \end{equation*}
  of \Cref{eq:zrs}. According to \Cref{eq:iwquot} and \Cref{pr:adddisc} we have
  \begin{equation*}
    \mathrm{ci}(G/Z) = \mathrm{ci}(G) - m - n.
  \end{equation*}
  On the other hand the image
  \begin{equation*}
    HZ/Z\cong H/H\cap Z
  \end{equation*}
  of $H$ through $G\to G/Z$ is now semisimple and linear, meaning that it is automatically closed \cite[Theorem 2, Lemma 5]{goto1}. In any decomposition
  \begin{equation*}
    Z\cap H\cong \bZ^{m'}\times \bR^{n'}
  \end{equation*}
  analogous to \Cref{eq:zrs} (again, ignoring compact factors) we must have $m'+n'\le m+n$, so by \Cref{eq:iwquot} and \Cref{pr:adddisc} again we will have
  \begin{equation*}
    \mathrm{ci}(H)-\mathrm{ci}(HZ/Z) = m'+n' \le m+n = \mathrm{ci}(G)-\mathrm{ci}(G/Z)
  \end{equation*}
  or, rearranging,
  \begin{equation*}
    \mathrm{ci}(G/Z) - \mathrm{ci}(H/Z) \le \mathrm{ci}(G) - \mathrm{ci}(H).
  \end{equation*}
  For that reason we have
  \begin{equation*}
    \mathrm{ci}(HZ/Z)\le \mathrm{ci}(G/Z)\Rightarrow \mathrm{ci}(H)\le \mathrm{ci}(G):
  \end{equation*}
  the linear-case inequality entails the general one.

  \vspace{.5cm}

  {\bf Reducing to semisimple $G$.} This is similar to the previous portion of the argument but simpler, now that we have specialized to linear groups. The additivity relation \Cref{eq:iwquot} applied to $N=R:=\mathrm{Rad}(G)$ (the radical) in particular gives 
  \begin{equation*}
    \mathrm{ci}(G/R) = \mathrm{ci}(G)-\mathrm{ci}(R)\le \mathrm{ci}(G).
  \end{equation*}
  The intersection $H\cap R$ is normal and solvable and hence discrete ($H$ being semisimple), so it must in fact be finite because $H$ is linear \cite[Lemma 5]{goto1}. This means that $HR/R\cong H/H\cap R$ has the same characteristic index as $H$, so it will be enough to prove
  \begin{equation*}
    \mathrm{ci}(HR/R)\le \mathrm{ci}(G/R).
  \end{equation*}
  Note that $HR/R\le G/R$ is closed again by \cite[Theorem 2, Lemma 5]{goto1}, since it is both semisimple and linear: the semisimplicity is one of the hypotheses, the linearity of $H$ is being assumed, and that of its quotient $HR/R\cong H/H\cap R$ follows from \cite[Lemma 9]{goto2}.

  The upshot is that we can assume $G$ to be semisimple.

  \vspace{.5cm}
  
  {\bf Linear, semisimple $G$.} For a global Iwasawa decomposition
  \begin{equation*}
    H\cong K_H\times A_H\times N_H,
  \end{equation*}
  the characteristic index $\mathrm{ci}(H)$ is $\dim(A_H) + \dim(N_H)$, so it will be enough to argue that
  \begin{enumerate}[(a)]
  \item\label{item:1} $\dim(N_H)\le \dim(N_G)$
  \item\label{item:2} and $\dim(A_H)\le \dim(A_G)$
  \end{enumerate}
  where similarly, $G\cong K_G\times A_G\times N_G$ is an Iwasawa decomposition. We address the two claims separately.

  \vspace{.5cm}
  
  {\bf \Cref{item:1}: $\dim(N_H)\le \dim(N_G)$.} To see this, note first that the elements of $N_H$ remain unipotent in $G$ when regarded as operators on the Lie algebra $\fg:=Lie(G)$ via the adjoint representation, because morphisms of semisimple Lie algebras preserve Jordan decompositions (e.g. \cite[\S 6.4, Corollary]{hum}).

  But this then means that $N_H$ is conjugate to a subgroup of $A_GN_G$ \cite[\S 2.8]{mst-max}, whose unipotent elements are precisely those of $N_G$. In short: $N_H$ is conjugate to a subgroup of $N_G$, implying the requisite dimension inequality.

  \vspace{.5cm}
  
  {\bf \Cref{item:2}: $\dim(A_H)\le \dim(A_G)$.} Equivalently, we will work with the Lie algebras $\fa_{H}:=Lie(A_H)$ and similarly for $G$. The argument is very similar in spirit to what we saw above: $\fa_G$ is maximal \define{real diagonal} in the sense of \cite[\S 1.3]{mst-max} (i.e. its elements are semisimple with real eigenvalues) so by loc.cit. it will be enough to argue that $\fa_H\subset \fg$ is again real diagonal (for it will then be conjugate in $G$ to a Lie subalgebra of $\fa_G$; see also \cite[Theorem 6.51 and surrounding discussion on Iwasawa-decomposition uniqueness]{knp}).

  That $\fa_H$ consists of semisimple operators on $\fg$ follows from the Jordan-decomposition preservation, as before. The issue is proving that those operators have \define{real} eigenvalues on $\fg$. To that end, note that
  \begin{itemize}
  \item since $\fg$ is a finite-dimensional representation of $\fh$, the eigenvalues in question are obtained by evaluating various integral weights \cite[\S 13.1]{hum} of the complexification $\fh^{\bC}$ on $\fa_H$;
  \item said weights are rational combinations of the roots of $\fh^{\bC}$ \cite[\S 13.1 and \S 13.2, Table 1]{hum};
  \item and the roots take real values on $\fa_H$ \cite[discussion preceding Proposition 6.60]{knp}.
  \end{itemize}
  This finishes the proof.
\end{proof}

\subsection{Side-note on linear quotients}

In the course of the proof of \Cref{pr:cile} we worked with a linear quotient of $G$ by some central subgroup. The latter, being abelian and closed in a connected Lie group, was of the form \Cref{eq:zrs}:
\begin{equation*}
  (\text{finitely-generated abelian group})\times (\text{Euclidean group})\times (\text{torus}).
\end{equation*}
As it turns out, this was overly cautious: one can always arrange to have the Euclidean factor absent. It seems worthwhile to record this here.

\begin{proposition}\label{pr:fgtor}
  A connected Lie group $G$ has a linear quotient by a central subgroup of the form
  \begin{equation}\label{eq:ator}
    A\times (\bS^1)^m,\ m\in \bZ_{\ge 0}
  \end{equation}
  for a finitely-generated discrete abelian group $A$.
\end{proposition}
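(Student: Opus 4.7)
The plan is to take $C:=L(G):=\bigcap_\rho\ker\rho$, the intersection of kernels of all continuous finite-dimensional representations $\rho$ of $G$; by construction $G/C$ is the universal linear quotient of $G$, hence linear. It remains to show that $L(G)$ has the claimed form.

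Since the adjoint representation $G\to GL(\fg)$ is finite-dimensional with kernel $Z(G)$, we have $L(G)\subseteq Z(G)$, so $L(G)$ is an abelian Lie group. Decomposing its identity component as $L(G)_0\cong\bR^a\times\bT^b$, it suffices to verify both that $a=0$ and that $L(G)/L(G)_0$ is finitely generated. The crux is $a=0$, equivalent to the claim that \emph{every central $V\cong\bR\le Z(G)$ admits a finite-dimensional representation of $G$ non-trivial on $V$}: given this, no central $\bR$-subgroup lies in $L(G)$, so $L(G)_0$ is a torus. Finite generation of the component group $L(G)/L(G)_0$ then follows from that of $\pi_1(G)$, which is furnished by \Cref{le:isfg}, together with the standard structure of $Z(\tilde G)$ for the universal cover $\tilde G\to G$ (which has finitely generated component group when $\tilde G$ is simply connected).

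For the construction step, lift $V$ to a central $\tilde V\cong\bR\le Z(\tilde G)$. Applying Ado's theorem to $\fg:=\mathrm{Lie}(G)$ and integrating on the simply connected $\tilde G$ (this is essentially \cite[\S1.4]{ragh}) yields a representation $\tilde\rho:\tilde G\to GL(n,\bR)$ with discrete central kernel, which in particular is non-trivial on $\tilde V$. The principal obstacle is \emph{descent}: $\tilde\rho$ must be modified to factor through $G=\tilde G/\pi_1(G)$ without losing the non-trivial action on the image of $\tilde V$ in $G$. This is accomplished by passing to a suitable subquotient or character-twist of $\tilde\rho$ relative to $\pi_1(G)$, exploiting on the one hand that central $\bR$-subgroups typically act unipotently in Ado-type representations (so non-triviality on $\tilde V$ is robust under a wide range of algebraic modifications), and on the other that $\pi_1(G)$, being finitely generated abelian, has a sufficiently rigid character theory to be absorbed by such twists; carrying this descent out uniformly for every central $\bR\le Z(G)$ is the main technical content of the argument.
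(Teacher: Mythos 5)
Your overall strategy---pass to the ``linearizer'' $L(G):=\bigcap_\rho\ker\rho$ and show directly that it has the form \Cref{eq:ator}---is genuinely different from the paper's, which never considers the universal linear quotient and instead splits $G$ into a Levi factor and the radical, invoking Got\^o's linearity criteria for each piece separately (the non-linearity of the Levi factor is concentrated in a closed central subgroup of the form $A\times(\bS^1)^m$ by \cite[Theorem 1]{goto1}, and the radical's obstruction is the maximal compact subgroup of $\overline{D(R)}$, a central torus). Your route would in fact prove something slightly sharper if completed. Two caveats before the main issue: the assertion that $G/L(G)$ is linear is not automatic from the definition (a jointly faithful family of finite-dimensional representations does not obviously produce a single faithful one); it is a genuine theorem about connected Lie groups and needs a citation or proof. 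Likewise the finite generation of $L(G)/L(G)_0$ is only gestured at.

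The decisive problem is the step you yourself flag as ``the main technical content of the argument'': showing that no central copy of $\bR$ lies in every $\ker\rho$, i.e.\ that $L(G)_0$ is a torus. You reduce this to descending an Ado-type representation of $\wt G$ along $\wt G\to G$ while keeping it non-trivial on the image of a central $\wt V\cong\bR$, and then offer only a description of what such a descent would have to accomplish (``a suitable subquotient or character-twist\dots exploiting that central $\bR$-subgroups typically act unipotently\dots''). That is not a proof, and the difficulty it elides is exactly the phenomenon exhibited by \Cref{ex:nector}: there, $\pi_1(G)$ winds irrationally around a central one-parameter subgroup, and killing $\pi_1(G)$ forces every finite-dimensional representation to kill an entire central circle it drags along in its closure. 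Ruling out the analogous scenario with $\bR$ in place of $\bS^1$ is precisely the content of the proposition, so the argument as written is circular at its crux: the ``robustness'' of non-triviality on $\wt V$ under the modifications you propose is the theorem, not a tool available for proving it. To close the gap you would essentially have to reprove Got\^o's structure results; the cleanest fix is to adopt the Levi-factor-plus-radical decomposition the paper uses (or cite \cite[Theorems 5 and 7]{goto2} and \cite[Theorem 1]{goto1} directly) rather than working with $L(G)$ head-on.
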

\begin{proof}
  Let $R\le G$ be the radical, and $M\le G$ of $G$ a {\it Levi factor}: a Lie subgroup (not necessarily closed, in general) whose underlying Lie algebra
  \begin{equation*}
    \fm:=Lie(M)\le \fg:=Lie(G)
  \end{equation*}
  is a supplementary summand to $\fr:=Lie(R)$ (e.g. \cite[Chapter VI, Theorem 4.1]{ser-lie} and \cite[\S 1.3]{ragh}).

  Connected Lie groups are linear as soon as their radicals and Levi factors are \cite[Theorem 7]{goto2}, so we will focus on achieving that by quotienting out a central subgroup of the desired form.

  Consider first the central subgroup $Z\le M$ obtained as the common kernel of {\it all} finite-dimensional $M$-representations. It will in particular act trivially on $R$ by conjugation, so $Z$ and its closure $\overline{Z}\le G$ must be central.

  Although in general $Z$ will not be closed (i.e. $Z\ne \overline{Z}$), it is nevertheless the case that $\overline{Z}$ is of the form \Cref{eq:ator}: this follows, for instance, from \cite[Theorem 1]{goto1} and its proof. We thus have a first quotient $G/\overline{Z}$ by a closed central subgroup of the desired form \Cref{eq:ator} that at least has a linear Levi factor. All subsequent quotients we consider will be by images (in $G/\overline{Z}$) of tori contained in $R$ that were central in $G$ to begin with, so we may as well simplify the discussion by assuming that the Levi factor $M$ was linear to begin with.

  Given this latter assumption, we only need to render the radical $R$ linear. According to \cite[Theorem 5]{goto2} we would be done if the closed derived subgroup $C(R):=\overline{D(R)}$ were simply-connected. Since $C(R)$ is nilpotent (being the closure of the nilpotent Lie group $D(G)$ \cite[Chapter V, Corollary 5.3]{ser-lie}), the only it could possibly fail to be simply-connected would be for its (unique, central-in-$G$: \cite[\S 1.8]{ragh}, \cite[Lemma 13]{goto2}) maximal compact subgroup $K$ to be a non-trivial torus.

  We can now pass to $G/K$: the linearity of the Levi factor $M$ has not been affected, as quotients of linear semisimple Lie groups are linear \cite[Lemma 9]{goto2}, and since the kernel $K$ is compact, closures of quotients are quotients of closures, etc., so that $C(G/K) = C(G)/K$. The latter is now simply-connected, and we are done.
\end{proof}

In general, the toral factor cannot be left out in \Cref{pr:fgtor}. The following example given in \cite[Appendix]{goto1} of a non-closed Lie-group embedding will serve to illustrate this. 

\begin{example}\label{ex:nector}
  Let $\widetilde{SL(2,\bR)}$ be the universal cover of $SL(2,\bR)$. Its center will be isomorphic to $\bZ$, and the kernel of the surjection back to $SL(2,\bR)$ is generated by a central element $\sigma$. Now set
  \begin{equation*}
    G:=\widetilde{SL(2,\bR)}\times \bR/\{(\sigma^{n+m},n+m\gamma)\ |\ m,n\in \bZ\}
  \end{equation*}
  for some irrational number $\gamma\in \bR$.

  A finite-dimensional representation of $\widetilde{SL(2,\bR)}$ factors through $SL(2,\bR)$ (as follows, for instance, from the representation theory of the complexified Lie algebra $\mathfrak{sl}(2,\bC)$ \cite[\S 7.2]{hum}), so any morphism $G\to GL(n,\bR)$ must annihilate $\sigma\in SL(2,\bR)\subset G$. But the closure of $\langle \sigma\rangle$ in $G$ contains the circle
  \begin{equation*}
    \bR/\{n-n\gamma\ |\ n\in \bZ\}\subset G,
  \end{equation*}
  so that closure will be an embedded copy of $\bZ\times \bS^1$. The resulting morphism
  \begin{equation*}
    \widetilde{SL(2,\bR)}\subset G\to G/\bZ\times \bS^1
  \end{equation*}
  is nothing but the original quotient
  \begin{equation*}
    \widetilde{SL(2,\bR)}\to SL(2,\bR)
  \end{equation*}
  (or rather is isomorphic to it). 
\end{example}

The issue of whether or not a Lie group has a discrete-kernel linear quotient relates to the preceding discussion in other ways too. Several times (e.g. in the proofs of \Cref{th:denseim} and \Cref{pr:cile}) we have used the fact that semisimple Lie groups with finite center are automatically closed \cite[Theorem 2]{goto1}, which in turn entails the fact that semisimple Lie subgroups of linear groups are closed \cite[Corollary to Theorem 2]{goto1}. This can be enhanced slightly.

\begin{lemma}\label{le:disclin}
  Let $G$ be a connected Lie group admitting a linear quotient $G/Z$ by a discrete central subgroup. Any connected semisimple Lie subgroup of $G$ is automatically closed.
\end{lemma}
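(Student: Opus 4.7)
Let $H\le G$ be a connected semisimple Lie subgroup, and denote the quotient morphism by $\pi:G\to G/Z$. The plan is to first show that $HZ$ is closed in $G$, then that $H$ is open (hence clopen) in $HZ$.

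\textbf{Step 1: $\pi(H)$ is closed in $G/Z$, so $HZ$ is closed in $G$.} The image $\pi(H)$ is a connected semisimple Lie subgroup of the linear Lie group $G/Z$ (semisimplicity is preserved by quotients, and is a property of the Lie algebra, which surjects under $\pi$). By \cite[Theorem 2 and Corollary]{goto1} — the same fact invoked in the proof of \Cref{pr:cile} — a semisimple Lie subgroup of a linear Lie group is automatically closed. Hence $\pi(H)\le G/Z$ is closed and so $HZ=\pi^{-1}(\pi(H))$ is closed in $G$.

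\textbf{Step 2: $H$ coincides with the identity component of $HZ$.} Since $HZ$ is now a closed subgroup of the Lie group $G$, it is itself a Lie subgroup of $G$. Because $Z$ is discrete in $G$, a sufficiently small identity neighborhood of $G$ meets $Z$ only at $1$; intersecting with $HZ$ shows that, locally near $1$, $HZ$ agrees with $H$. Consequently the Lie algebra of $HZ$ is exactly $\fh:=\mathrm{Lie}(H)$, and thus the identity component $(HZ)_0$ is a connected Lie subgroup of $G$ with Lie algebra $\fh$. Since connected Lie subgroups of $G$ are uniquely determined by their Lie subalgebra of $\fg$, we conclude $(HZ)_0=H$.

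\textbf{Step 3: conclude.} As the identity component of the (topological) group $HZ$, the subgroup $H$ is open in $HZ$, and open subgroups are automatically closed (their complements being unions of open cosets). Combining with Step 1, $H$ is closed in $HZ$ and $HZ$ is closed in $G$, so $H$ is closed in $G$.

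The only non-routine ingredient is Step 1, i.e.\ the invocation of Goto's theorem on closedness of semisimple subgroups of linear Lie groups; the rest is a purely formal manipulation of connected components and Lie subalgebras. No obstacle is anticipated beyond cleanly verifying that $(HZ)_0$ has Lie algebra $\fh$, which follows transparently from the discreteness of $Z$.
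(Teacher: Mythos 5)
Your proposal is correct and follows essentially the same route as the paper: invoke Goto's corollary to see that the semisimple image $HZ/Z\cong H/H\cap Z$ is closed in the linear group $G/Z$, pull back to get $HZ$ closed in $G$, and identify $H$ with the identity component $(HZ)_0$. The only difference is that you spell out the last identification (which the paper asserts without comment); your local argument there is slightly loose — an element of $HZ$ near $1$ need not factor as (small element of $H$)(small element of $Z$) — but the conclusion $\mathrm{Lie}(HZ)=\mathrm{Lie}(H)$ is immediate anyway, since $\pi|_{HZ}$ has discrete kernel and image $\pi(H)$, forcing $\dim HZ=\dim\pi(H)=\dim H$.
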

\begin{proof}
  This is immediate from the already-cited \cite[Corollary to Theorem 2]{goto1}: $H\le G$ is connected semisimple and the semisimple subgroup
  \begin{equation*}
    \mathrm{im}(H) = HZ/Z\cong H/H\cap Z\le G/Z
  \end{equation*}
  is closed in the linear group $G/Z$. But then so is its preimage $HZ$ through $G\to G/Z$, so the identity component $H=(HZ)_0$ must be closed too.
\end{proof}

This unifies a number of other results similar results. For instance, semisimple Lie subgroups are automatically closed when the large ambient group $G$ is
\begin{itemize}
\item simply-connected (\cite[Corollary 1]{mst-ext}), because simply-connected groups have discrete-kernel linear quotients \cite[\S 1.4]{ragh};
\item or compact (\cite[Corollary 2]{mst-ext}), since compact Lie groups are already linear \cite[Corollary 2.40]{hm}.
\end{itemize}

\section{Colimits of locally compact groups}

\begin{theorem}\label{th:aconn}
  A family of at least two non-trivial almost-connected locally compact groups with at least one connected cannot have a coproduct in $\mathcal{LCG}$. 
\end{theorem}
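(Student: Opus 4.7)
Suppose for contradiction that the coproduct $G=\coprod_i G_i$ exists in $\mathcal{LCG}$ with canonical maps $\iota_i:G_i\to G$. Extract two of the groups: $G_1$ connected (available by hypothesis) and $G_2$ another non-trivial member. Write $G_0$ for the identity component of $G$; since $G_1$ is connected one has $\iota_1(G_1)\subseteq G_0$, and $r:=\mathrm{ci}(G_0)$ is a fixed non-negative integer by Iwasawa's theorem. The strategy is to produce, for every $n\ge 1$, a continuous homomorphism $\phi:G\to H$ into a connected simple non-compact Lie group $H$ with $\mathrm{ci}(H)\ge n$ such that $\overline{\phi(G_0)}=H$; then \Cref{th:denseim} applied to the dense-image morphism $G_0\to H$ of connected locally compact groups forces $r\ge n$. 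As $n$ is arbitrary, this contradicts the finiteness of $r$.

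To produce such a $\phi$, I would take $H:=PSL(n+2,\bR)$, which is a connected simple non-compact Lie group whose characteristic index $\tfrac{(n+1)(n+4)}{2}$ (read off its Iwasawa $KAN$ decomposition) grows without bound. Construct continuous $\phi_1:G_1\to H$ non-trivial and $\phi_2:G_2\to H$ so that $\overline{\langle\phi_1(G_1),\phi_2(G_2)\rangle}=H$, and set all other $\phi_i$ to be trivial. The universal property yields a continuous $\phi:G\to H$ with $\phi\circ\iota_i=\phi_i$; since the universal property also forces the identity $G=\overline{\langle\iota_i(G_i)\rangle}$, one has $\overline{\phi(G)}=H$. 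Normality of $G_0$ in $G$ passes to closures, giving $\overline{\phi(G_0)}\triangleleft\overline{\phi(G)}=H$. The simplicity of $H$ together with the non-triviality of $\phi_1(G_1)\subseteq \phi(G_0)$ then forces $\overline{\phi(G_0)}=H$, as required.

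\textbf{Main obstacle.} The crux is the construction of a pair $(\phi_1,\phi_2)$ with dense generated image. Each non-trivial almost-connected $G_i$ factors through a non-trivial Lie quotient $G_i/K_i$ (for a small compact normal $K_i\ne G_i$ guaranteed by the pro-Lie structure), and non-trivial connected Lie groups admit non-trivial finite-dimensional representations (via the adjoint representation in the non-abelian case, explicit characters in the abelian case); composing and adjusting determinants yields non-trivial continuous $\phi_i:G_i\to SL(n+2,\bR)\twoheadrightarrow PSL(n+2,\bR)$ for $n$ large. The delicate step is to verify that, after conjugating one of the two images by a generic element of $H$, the Lie subalgebras of $\overline{\phi_1(G_1)}$ and $\overline{\phi_2(G_2^0)}$---supplemented by $\mathrm{Ad}$-conjugates under $\phi_2(G_2)$ when $G_2$ is disconnected, in particular when $G_2^0$ is trivial and $\phi_2(G_2)$ is merely a finite subgroup---generate $\mathfrak{sl}(n+2,\bR)$ as a Lie algebra; the closed subgroup they generate then has full Lie algebra inside the connected $PSL(n+2,\bR)$ and hence equals it. This genericity assertion, a variant of ``two generic elements of a semisimple Lie algebra generate it as a Lie algebra'', is where I expect the bulk of the technical work to lie.
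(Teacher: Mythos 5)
Your top-level skeleton agrees with the paper's: use the universal property to map the putative coproduct $G$ into target groups of unbounded characteristic index, then invoke the monotonicity of $\mathrm{ci}$ along dense-image morphisms (\Cref{th:denseim}) to contradict the finiteness of $\mathrm{ci}(G_0)$. The ``upper half'' of your argument is sound and in one respect slicker than the paper's: by insisting that the target be the centerless simple group $PSL(n+2,\bR)$ and that $\overline{\phi(G)}$ be \emph{all} of it, you get $\overline{\phi(G_0)}=H$ for free from normality plus non-triviality of $\phi_1(G_1)\subseteq\phi(G_0)$, whereas the paper has to pass to an almost-connected open subgroup of $G$ (the compression of \Cref{def:compr}), apply \Cref{le:connsurj} to see that $G_0$ dominates the relevant identity component, and then still needs \Cref{pr:cile} because it only arranges for the generated group to \emph{contain} a semisimple subgroup of large index rather than to equal one. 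Your identity $G=\overline{\langle\iota_i(G_i)\rangle}$ is also correctly justified.

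The genuine gap is exactly where you place it, but the route you sketch for closing it is unlikely to work as stated. The paper devotes all of \Cref{pr:areciunb} (plus \Cref{le:islinunb,le:finindok}) to this point. When both groups have non-trivial connected Lie quotients, the paper does not argue by ``two generic elements generate'': it arranges for two circle subgroups to act with carefully chosen generic character multisets, making them \emph{strongly regular} in $PSL$, and then cites a non-trivial external theorem (\cite[Theorem 3.5]{2106.11955v1}) asserting that some pair of conjugates of such circles topologically generates. More seriously, your fallback for disconnected $G_2$ --- generating $\mathfrak{sl}(n+2,\bR)$ by $\mathrm{Lie}(\overline{\phi_1(G_1)})$ together with its $\mathrm{Ad}$-conjugates under the finite group $\phi_2(G_2)$, after conjugating by a generic $h\in H$ --- runs into the problem that the effective conjugators $h\sigma h^{-1}$ are confined to a fixed conjugacy class of torsion elements, a positive-codimension subvariety of $H$; a ``generic element'' argument does not apply on such a thin set, and establishing generation there is a genuinely delicate claim you would have to prove from scratch. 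The paper sidesteps this entirely: for $G_2$ finite cyclic it rewrites $G_1 * (\bZ/n)$ as $(\Asterisk_i \mathrm{Ad}_{\sigma^i}G_1)\rtimes(\bZ/n)$, applies the connected-connected case to two of the \emph{conjugate copies of $G_1$}, and induces the resulting representation up the finite-index inclusion, so that the finite group never needs to participate in the Lie-algebra generation at all. Without an argument of this kind (or a proof of your genericity assertion), the proposal does not yet constitute a proof.
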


It will require some detours and surrounding scaffolding, on which we now embark. First, a converse of sorts:

\begin{lemma}\label{le:discok}
  If $\Gamma_i$, $i\in I$ is a family of discrete groups then their coproduct in the category of groups is also their coproduct in $\mathcal{LCG}$.
\end{lemma}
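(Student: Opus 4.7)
The plan is simply to equip the group-theoretic free product $\Gamma := \ast^{\mathcal{GP}}_i \Gamma_i$ with the discrete topology and verify that the resulting object of $\mathcal{LCG}$ satisfies the $\mathcal{LCG}$-coproduct universal property. Observe first that $\Gamma$ is discrete, hence locally compact, so it belongs to $\mathcal{LCG}$; and the canonical morphisms $\iota_i: \Gamma_i \to \Gamma$ are trivially continuous since their domains are discrete.

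Now fix any $H \in \mathcal{LCG}$ together with a family of continuous homomorphisms $f_i: \Gamma_i \to H$. Since each $\Gamma_i$ is discrete, these are simply abstract group morphisms, so the $\mathcal{GP}$-universal property of $\Gamma$ produces a unique abstract group morphism $f: \Gamma \to H$ with $f \circ \iota_i = f_i$ for all $i$. Continuity of $f$ is automatic from the discreteness of $\Gamma$, so $f$ is a morphism in $\mathcal{LCG}$. Uniqueness among $\mathcal{LCG}$-morphisms is immediate: any continuous $g: \Gamma \to H$ with $g \circ \iota_i = f_i$ is, in particular, an abstract group morphism satisfying the same constraint, and hence equals $f$ by uniqueness in $\mathcal{GP}$. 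This is all there is to say; there is no genuine obstacle, the point of the lemma being merely to record that the discrete case behaves as one would hope.
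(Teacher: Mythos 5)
Your proof is correct and is essentially the paper's argument made explicit: the paper simply notes that the functor equipping a group with the discrete topology is left adjoint to the forgetful (discretization) functor $\mathcal{LCG}\to\mathcal{GP}$ and therefore preserves coproducts, whereas you verify the resulting universal property by hand. Both arguments rest on the same single observation --- that homomorphisms into and out of discrete groups are automatically continuous --- so there is nothing to add.
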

\begin{proof}
  This is clear: the embedding functor of the category $\mathcal{GP}$ of discrete groups into $\mathcal{LCG}$ is left adjoint to the discretization functor $\mathcal{LCG}\to \mathcal{GP}$, and hence it preserves colimits \cite[(dual of) \S V.5, Theorem 1]{mcl}.
\end{proof}

The following simple remark will, on occasion, allow us to consider simpler groups. 

\begin{lemma}\label{le:quots}
  If a family $\{G_i\}_{i\in I}$ has a coproduct in $\mathcal{LCG}$ then so does any family of quotients $G_i\to \overline{G_i}$.
\end{lemma}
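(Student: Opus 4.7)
The plan is to construct the coproduct of the quotient family $\{\overline{G_i}\}$ directly from the assumed coproduct $G := \coprod_{i\in I} G_i$ in $\mathcal{LCG}$ by forming an appropriate quotient of $G$. Writing $\iota_i : G_i \to G$ for the structural coproduct morphisms and $K_i := \ker(q_i)\trianglelefteq G_i$ for the (closed, normal) kernels of the quotient maps $q_i : G_i\to\overline{G_i}$, the natural candidate is
\begin{equation*}
  \overline{G} := G/N,\qquad N := \text{the smallest closed normal subgroup of $G$ containing }\bigcup_{i\in I}\iota_i(K_i).
\end{equation*}
Since $N$ is closed and normal, $\overline{G}$ is again locally compact (Hausdorff), hence lies in $\mathcal{LCG}$. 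Composing $\iota_i$ with the quotient $G\to \overline{G}$ annihilates $K_i$ and therefore factors uniquely as $\overline{\iota_i}\circ q_i$ for a continuous group morphism $\overline{\iota_i}: \overline{G_i}\to \overline{G}$; these will be the structure maps of the prospective coproduct.

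To verify the universal property, I would start with an arbitrary cocone $\{f_i:\overline{G_i}\to H\}_{i\in I}$ in $\mathcal{LCG}$. The compositions $f_i\circ q_i : G_i\to H$ form a cocone over $\{G_i\}$, so by the assumed universal property of $G$ there is a unique continuous morphism $f:G\to H$ with $f\circ \iota_i = f_i\circ q_i$ for every $i$. By construction $\ker f$ is a closed normal subgroup of $G$ containing every $\iota_i(K_i)$, hence contains $N$, and so $f$ descends uniquely to a continuous morphism $\overline{f}:\overline{G}\to H$ with $\overline{f}\circ \overline{\iota_i} = f_i$. Uniqueness of $\overline{f}$ subject to these identities follows from the uniqueness in the universal property of $G$: any two candidates would lift to two morphisms $G\to H$ agreeing on all $\iota_i$, which must then coincide.

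Since the argument is essentially a categorical routine — coproducts are preserved upon forming a ``relation-enforcing'' quotient — I do not anticipate any serious obstacle; the only point that is not pure formality is the verification that $G/N$ remains locally compact and Hausdorff, which is the standard consequence of $N$ being a closed normal subgroup of a locally compact group. No appeal to almost-connectedness, Lie structure, or the $\mathrm{ci}$ machinery is needed here.
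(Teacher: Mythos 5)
Your proposal is correct and is exactly the paper's argument: the paper likewise forms the quotient of the assumed coproduct $G$ by the smallest closed normal subgroup generated by the kernels $\ker(G_i\to\overline{G_i})$ and notes that it inherits the universal property. You have simply written out in full the verification that the paper dismisses as ``obvious,'' which is harmless.
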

\begin{proof}
  This is straightforward: if $G=\bigstar_{\mathcal{LCG}} G_i$ denotes the coproduct of the original family, its quotient by the smallest closed normal subgroup generated by all
  \begin{equation*}
    \ker\left(G_i\to \overline{G_i}\right)
  \end{equation*}
  obviously satisfies the universality property required of the coproduct $\bigstar_{\mathcal{LCG}} \overline{G_i}$.
\end{proof}

Next, we isolate the following notion that will recur in subsequent discussions.

\begin{definition}\label{def:unb}
  A family $\cG:=\{G_i\ |\ i\in I\}$ of topological groups is \define{jointly characteristic-index-unbounded} or \define{(jointly) $\mathrm{ci}$-unbounded} for short if there are morphisms
  \begin{equation}\label{eq:phis}
    \varphi_i:G_i\to L
  \end{equation}
  into locally compact groups wherein the connected component of the closed subgroup $G_{\varphi_i}$ generated by $\varphi_i(G_i)$ has arbitrarily large characteristic index.

  $\cG$ is {\it linearly} (jointly) $\mathrm{ci}$-unbounded if it satisfies the above condition with morphisms into {\it linear} groups $L$ (or equivalently, $L=GL(V)$ for finite-dimensional vector spaces $V$).

  If $\cG$ is linearly $\mathrm{ci}$-unbounded and families \Cref{eq:phis} exist with $G_{\varphi_i}$ containing connected semisimple Lie groups of arbitrarily large characteristic index, then $\cG$ is {\it semisimply} $\mathrm{ci}$-unbounded, or (jointly) {\it $\mathrm{ci}_{ss}$-unbounded}. 
\end{definition}

We will make use of the following device at least twice, so it is perhaps worth highlighting.

\begin{definition}\label{def:compr}
  Consider two morphisms $\varphi_i:G_i\to G$ into a locally compact group, and suppose we would rather have $\varphi$ land in an open subgroup of $G$ having a property $\cP$ (e.g. being pro-lie or almost-connected) which is closed under
  \begin{itemize}
  \item taking open subgroups,
  \item and finite-index extensions.
  \end{itemize}
  Then, assuming that
  \begin{itemize}
  \item one of $G_i$, $G_1$, say, is non-discrete;
  \item $G$ is similarly non-discrete;
  \item and $G$ does have a property-$\cP$ open subgroup $H\le G$,
  \end{itemize}
  we can first consider the open preimage
  \begin{equation*}
    H_1:=\varphi_1^{-1}(H)\le G_1,
  \end{equation*}
  which will be non-trivial because $G_1$ is assumed non-discrete. The analogous preimage in $G_2$ might be trivial if $G_2$ is finite, but in that is the case we can always replace the initial open subgroup $H\le G$ with
  \begin{equation*}
    \left(\bigcap_{g\in G_2} \varphi_2 g H\varphi_2 g^{-1}\right)\varphi_2(G_2) \le G,
  \end{equation*}
  which will still have property $\cP$ by assumption. This alteration, though, ensures that when $G_2$ is finite we can simply take $H_2:=G_2$ and still have $\varphi_2(H_2)\le H$.

  We will have thus found a property-$\cP$ open subgroup $H\le G$ and non-trivial open subgroups $H_i\le G_i$ for which
  \begin{equation*}
    \varphi(H_i)\le H. 
  \end{equation*}
  Under these conditions we will say that the original morphisms $\varphi_i$ have been \define{compressed} to
  \begin{equation*}
    \varphi_i|_{H_i}:H_i\to H
  \end{equation*}
  respectively, and what we have gained is the fact that the codomain now has the desired property $\cP$. Furthermore, if $G_2$ is finite we can simply take $H_2=G_2$.
\end{definition}

The difference between the two versions of \Cref{def:unb} vanishes in the cases of interest here.

\begin{lemma}\label{le:islinunb}
  Let $\cG:=\{G_i,\ i=1,2\}$ be a pair of almost-connected locally compact groups with $G_1$ infinite. If $\cG$ is $\mathrm{ci}$-unbounded in the sense of \Cref{def:unb} then it is linearly $\mathrm{ci}$-unbounded.
\end{lemma}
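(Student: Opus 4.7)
The plan is to start with the morphisms $\varphi_{i,n}:G_i\to L_n$ witnessing $\mathrm{ci}$-unboundedness and successively transform them so the targets become linear, while tracking the characteristic index of the identity component of the closed image.

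First I would reduce to almost-connected Lie targets. Replacing $L_n$ by $\overline{\varphi_{i,n}(G_i)}$ does no harm: the identity component of the image closure is unchanged, $\varphi_{i,n}$ now has dense image, and the new $L_n$ is almost-connected because its component group is a continuous image of the compact $G_i/G_{i,0}$. Since almost-connected groups are pro-Lie, a further quotient by a small compact normal subgroup $K_n\trianglelefteq L_n$ puts us in the Lie setting of $L_n/K_n$; this step preserves the $\mathrm{ci}$ of the identity component of the (dense) image, by combining Iwasawa's additivity \Cref{eq:iwquot} applied to the connected part $K_{n,0}$ with \Cref{pr:adddisc} applied to the finite (and automatically central, by connectedness) quotient $K_n/K_{n,0}$.

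The identity component $(L_n/K_n)_0$ is then connected Lie, open of finite index in $L_n/K_n$, and \Cref{pr:fgtor} supplies a linear quotient $(L_n/K_n)_0\to L'_n$ with kernel central of the form $A_n\times (\bS^1)^{m_n}$ for some finitely-generated discrete abelian $A_n$. Finite-index induction extends this representation to a linear representation of all of $L_n/K_n$ on a finite-dimensional space, and composing with the reductions above yields morphisms $\psi_{i,n}:G_i\to GL(V_n)$; for the joint statement ($i=1,2$) one simply takes the product of the two linear targets, which is again linear.

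The hard part will be bounding the drop in $\mathrm{ci}$ caused by the linearization: by \Cref{pr:adddisc} this drop is exactly $\mathrm{rank}(A_n)$. Inspection of the proof of \Cref{pr:fgtor} shows that $A_n$ arises from the closure of the obstruction to linearity of the semisimple Levi factor $M_n$ of $(L_n/K_n)_0$, and that its rank does not exceed the number of non-compact simple factors of $M_n$ (each contributes at most a rank-$1$ summand via the torsion-free part of $\pi_1$ of its adjoint form, and passing to closures does not increase the discrete rank). Since each non-compact simple factor contributes at least $2$ to $\mathrm{ci}(M_n)$, this gives $\mathrm{rank}(A_n)\le \mathrm{ci}(M_n)/2\le \mathrm{ci}((L_n/K_n)_0)/2$; hence the characteristic index of the identity component of the closed image of $\psi_{i,n}$ is at least $n/2$, which tends to infinity with $n$ and delivers the desired linear $\mathrm{ci}$-unboundedness.
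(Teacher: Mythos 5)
There is a genuine gap, and it stems from a misreading of \Cref{def:unb}. Joint $\mathrm{ci}$-unboundedness concerns the closed subgroup of a \emph{common} target $L$ generated by \emph{both} images $\varphi_1(G_1)$ and $\varphi_2(G_2)$ together --- that is how the notion is used in \Cref{pr:lgind} and established in \Cref{pr:areciunb} (two circles jointly generating $PSL_n$, each individually generating something compact). Your argument treats the two groups separately throughout: you replace $L_n$ by the single-image closure $\overline{\varphi_{i,n}(G_i)}$, and at the end you ``simply take the product of the two linear targets,'' which destroys exactly the interaction between the two images that the lemma is about (under your reading a pair of compact groups could never be $\mathrm{ci}$-unbounded at all, contradicting \Cref{pr:areciunb}). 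The misreading is also precisely where your first reduction breaks: the closed subgroup generated by the two images need not be almost-connected even though each $G_i$ is --- two finite subgroups of a discrete group can generate an infinite discrete group --- so the claim that ``its component group is a continuous image of the compact $G_i/G_{i,0}$'' is true for each individual image closure but false for the jointly generated subgroup, which is the relevant one.

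The paper's proof is built to circumvent these two obstructions: it first \emph{compresses} (\Cref{def:compr}), shrinking the domains to non-trivial open (hence finite-index) subgroups $H_i\le G_i$ whose images land in a well-behaved open subgroup of $L$; it linearizes there to get $\rho_i:H_i\to GL(V)$ with $\overline{\langle\rho_i(H_i)\rangle}$ of large characteristic index; and it then uses finite-dimensional induction $\mathrm{ind}_{H_i}^{G_i}$ to manufacture representations of the full $G_i$ on a common space $W$ whose restrictions to the $H_i$ contain the $\rho_i$ on a common subspace $V\le W$, so that the jointly generated closed subgroup dominates the original large-$\mathrm{ci}$ group (via restriction to $V$ and \Cref{th:denseim}). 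None of this machinery appears in your proposal; your only induction is along $(L_n/K_n)_0\le L_n/K_n$ on the target side, which does not address the problem. On the positive side, your worry about the drop in characteristic index under linearization is legitimate --- the paper's ``by further taking a quotient we can assume that $L$ is linear'' is terse on this point --- and your bound $\mathrm{rank}(A_n)\le \mathrm{ci}(M_n)/2$ (at most one $\bZ$ per Hermitian-type simple factor, each such factor contributing at least $2$ to $\mathrm{ci}$) is a sensible way to control it; but that observation does not repair the architecture of the argument.
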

\begin{proof}
  We are assuming that $G_i$ jointly map to a locally compact group $L$ where they generate a closed subgroup with large characteristic index. Our assumption that $G_1$ is infinite allows us to use the compression procedure in \Cref{def:compr}: we can then assume that $L$ is connected an hence pro-Lie, at the cost of replacing $G_i$ with open non-trivial subgroups $H_i\le G_i$.

  By further taking a quotient we can assume that $L$ is linear an Lie. In short, we have representations $\rho_i:H_i\to GL(V)$ for a finite-dimensional $V$ such that $\rho_i(H_i)$ generate a group with large $\mathrm{ci}$.

  $H_i\le G_i$ are open non-trivial subgroups of almost-connected groups, so they must have finite index. We can thus induce $\rho_i$ to a representation
  \begin{equation*}
    \sigma_i:=\mathrm{ind}_{H_i}^{G_i}\rho_i
  \end{equation*}
  on a finite-dimensional space $V_i$ \cite[\S 2.1]{kt}. Furthermore, by the very definition of the induced representation, the restriction $\sigma_i|_H$ contains a copy of $\rho_i$ operating on an isomorphic copy $V\le V_i$.
 
  By replacing $V_i$ with $V_i^{\oplus n_i}$ for appropriately-chosen $n_i$ if necessary, we can assume that both $\sigma_i$ operate on the same space $W$. Then, upon conjugating $\sigma_2$ by an invertible operator on $W$, we can assume that we have isomorphic copies
  \begin{equation*}
    \sigma_i|_{H_i}\cong \rho_i,\ i=1,2
  \end{equation*}
  operating on the same subspace $V\le W$. All in all, we have
  \begin{itemize}
  \item morphisms $\sigma_i:G_i\to GL(W)$;
  \item whose restrictions to $H_i$ leave a subspace $V\le W$ invariant;
  \item such that the resulting subgroup
    \begin{equation*}
      \overline{\langle \sigma_i(H_i)\rangle}\subseteq GL(V)
    \end{equation*}
    has large characteristic index. 
  \end{itemize}
  But that index will then be dominated by that of 
  \begin{equation*}
    \overline{\langle \sigma_i(G_i)\rangle}\subseteq GL(W),
  \end{equation*}
  which must thus also be large. But we saw above that this is precisely what was needed to complete the proof.
\end{proof}

Note, in passing, that the induction-based argument used in \Cref{le:islinunb} also proves

\begin{lemma}\label{le:finindok}
  Let $\{G_i,\ i=1,2\}$ be a pair of locally compact groups and $H_i\le G_i$ two finite-index subgroups. If $\{H_i\}$ is linearly $\mathrm{ci}$-unbounded then that unboundedness can be witnessed by finite-dimensional representations
  \begin{equation*}
    G_i\to GL(V). 
  \end{equation*}
\end{lemma}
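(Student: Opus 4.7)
The strategy is to reuse, essentially verbatim, the induction-based portion of the proof of \Cref{le:islinunb}. In that argument the preliminary compression step was used precisely to produce finite-index subgroups $H_i \le G_i$ together with representations on them; here those data are supplied at the outset, so one can simply restart at the point in the proof where the induced representations $\sigma_i := \mathrm{ind}_{H_i}^{G_i}\rho_i$ are formed.

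Concretely, fix any witness $\rho_i : H_i \to GL(V)$ to the linear $\mathrm{ci}$-unboundedness of $\{H_i\}$, induce to $\sigma_i : G_i \to GL(V_i)$ (finite-dimensional because $[G_i : H_i] < \infty$), and recall that by construction $\sigma_i|_{H_i}$ contains $\rho_i$ on a subspace isomorphic to $V$. After passing to $V_i^{\oplus n_i}$ with appropriate multiplicities both $\sigma_i$ can be arranged to act on a common finite-dimensional space $W$; a single conjugation in $GL(W)$ then aligns the two embedded copies of $V$ as a common subspace $V \le W$, invariant under $\sigma_i|_{H_i}$ and carrying $\rho_i$ upon restriction.

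The conclusion goes through exactly as in the cited proof: restriction to $V$ gives a dense-image continuous morphism $\overline{\langle \sigma_i(H_i)\rangle} \to \overline{\langle \rho_i(H_i)\rangle}$, and \Cref{th:denseim} (applied on identity components) transports the large characteristic index from the $GL(V)$-side to $\overline{\langle \sigma_i(H_i)\rangle} \subseteq GL(W)$. Since $\sigma_i(H_i)$ has finite index in $\sigma_i(G_i)$, the closure $\overline{\langle \sigma_i(H_i)\rangle}$ is a closed, finite-index (and hence open) subgroup of $\overline{\langle \sigma_i(G_i)\rangle}$, so the two share an identity component, and the witnesses $\sigma_i : G_i \to GL(W)$ are what the statement demands. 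The only accounting needing any care is this interplay between dense image, finite-index closures and identity components in the last step, which is handled just as at the end of the proof of \Cref{le:islinunb}; there is no genuine new obstacle to face.
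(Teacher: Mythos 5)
You follow the paper's intended route: \Cref{le:finindok} is stated there precisely as a byproduct of the induction step in the proof of \Cref{le:islinunb}, and rerunning that step starting from the given witnesses $\rho_i:H_i\to GL(V)$ is exactly what is meant. The induction to $\sigma_i$, the passage to $V_i^{\oplus n_i}$, and the alignment of the two embedded copies of $V$ inside $W$ are all fine, as is the use of \Cref{th:denseim} to push the large characteristic index from $\overline{\langle\rho_i(H_i)\rangle}\le GL(V)$ up to $\overline{\langle\sigma_i(H_i)\rangle}\le GL(W)$.

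Your justification of the final step, however, is wrong. It is not true that $\overline{\langle\sigma_1(H_1),\sigma_2(H_2)\rangle}$ has finite index in $\overline{\langle\sigma_1(G_1),\sigma_2(G_2)\rangle}$ merely because each $\sigma_i(H_i)$ has finite index in $\sigma_i(G_i)$: the subgroup generated by finite-index subgroups of two groups can have infinite index in the subgroup those two groups generate. For instance, let $G_1=G_2=\bZ/2$ act on $\bR^2$ as reflections about lines meeting at an angle that is an irrational multiple of $\pi$, and take $H_1=H_2=\{1\}$ (index $2$): the $H_i$ generate the trivial group, while the $G_i$ generate an infinite dihedral group dense in $O(2)$, so the two closures neither are of finite index in one another nor share an identity component. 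This is of course not a counterexample to the lemma, but it does invalidate the sentence ``the two share an identity component'' and hence your transfer of the characteristic index from $\overline{\langle\sigma_i(H_i)\rangle}$ to $\overline{\langle\sigma_i(G_i)\rangle}$. The step that actually closes the argument (and is what the paper's one-line assertion at the end of the proof of \Cref{le:islinunb} amounts to) is that $\overline{\langle\sigma_i(H_i)\rangle}$ is a \emph{closed subgroup} of $\overline{\langle\sigma_i(G_i)\rangle}$, so its large characteristic index is dominated by that of the ambient group via \Cref{pr:cile}; this is legitimate because in every application the unboundedness is witnessed by connected semisimple subgroups ($\mathrm{ci}_{ss}$-unboundedness), the only setting in which the paper establishes subgroup-monotonicity of $\mathrm{ci}$. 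Replace the finite-index/open-subgroup reasoning by that closed-subgroup domination and the proof is the paper's.
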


Returning to coproducts, the concept of joint $\mathrm{ci}$-unboundedness is germane for the following reason.

\begin{proposition}\label{pr:lgind}
  Let $G_i$, $i=1,2$ be two almost-connected locally compact groups such that
  \begin{itemize}
  \item $G_1$ is infinite;
  \item and such that all pairs of non-trivial open subgroups $H_i\le G_i$ are jointly $\mathrm{ci}_{ss}$-unbounded in the sense of \Cref{def:unb}.
  \end{itemize}
  Then, $G_i$ do not have a coproduct in the category $\mathcal{LCG}$.
\end{proposition}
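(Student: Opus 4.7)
My plan is a proof by contradiction: assume the coproduct $G:=G_1*_{\mathcal{LCG}}G_2$ exists, with universal morphisms $\iota_i:G_i\to G$, and derive contradictory upper and lower bounds on characteristic indices inside $G$. First I would use van Dantzig's theorem to produce an open almost-connected subgroup $H\le G$ (the preimage of a compact open subgroup of $G/G_0$), and then apply the compression procedure of \Cref{def:compr} --- with property ``almost-connected'' and using that $G_1$ is infinite --- to obtain non-trivial open subgroups $H_i\le G_i$ with $\iota_i(H_i)\subseteq H$. Each $H_i$ has finite index in $G_i$ and satisfies $(H_i)_0=(G_i)_0$; set $K:=H_0=G_0$ and $r:=\mathrm{ci}(K)$.

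The uniform upper bound I would establish is that $\mathrm{ci}((\overline{\Phi(H)})_0)\le r$ for every continuous morphism $\Phi:G\to L$ into a locally compact $L$. Indeed, \Cref{th:denseim} applied to the dense-image morphism $K\to\overline{\Phi(K)}$ of connected locally compact groups gives $\mathrm{ci}(\overline{\Phi(K)})\le r$; compactness of $H/K$ forces $\overline{\Phi(H)}/\overline{\Phi(K)}$ compact, whence $(\overline{\Phi(H)})_0/\overline{\Phi(K)}$ is compact connected of characteristic index zero, and the additivity \Cref{eq:iwquot} closes out the bound.

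For the matching lower bound, fix $n>r$. The $\mathrm{ci}_{ss}$-unboundedness of $(H_1,H_2)$ provides linear representations $\rho_i:H_i\to GL(V)$ whose topologically-generated closed subgroup $S'\subseteq GL(V)$ contains a connected semisimple Lie subgroup $S$ with $\mathrm{ci}(S)\ge n$ (closed in $GL(V)$ by \cite[Corollary to Theorem 2]{goto1}). Inducing and padding as in \Cref{le:islinunb}, I would extend the $\rho_i$ to $\sigma_i:G_i\to GL(W)$ on a common space $W\supseteq V$ with $\sigma_i|_{H_i}$ restricting on $V$ to $\rho_i$; the coproduct universal property then supplies $\Sigma:G\to GL(W)$ with $\Sigma\circ\iota_i=\sigma_i$. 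The closed Lie subgroup $\tilde S:=\overline{\langle\sigma_1(H_1),\sigma_2(H_2)\rangle}\le\overline{\Sigma(H)}$ stabilizes $V$, and the restriction $p:\tilde S\to S'$ is a Lie-group homomorphism with dense image.

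The main obstacle will be upgrading $S\le S'$ to a closed connected semisimple Lie subgroup $T\le\tilde S$ with $\mathrm{ci}(T)\ge n$. I would do this via Whitehead / Levi--Mal'cev lifting across the Lie-algebra map $dp:\mathrm{Lie}((\tilde S)_0)\to\mathrm{Lie}(p((\tilde S)_0))$: provided $\mathrm{Lie}(S)\subseteq dp(\mathrm{Lie}((\tilde S)_0))$, the semisimple $\mathrm{Lie}(S)$ lifts (by vanishing of $H^2(\mathrm{Lie}(S),-)$) to a semisimple subalgebra of $\mathrm{Lie}((\tilde S)_0)$, which integrates to such a $T$, closed in $GL(W)$ by linearity and \cite[Corollary to Theorem 2]{goto1}. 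The subtle verification is the Lie-algebra containment, achievable by arranging that $S$ is already generated by the images of the connected identity components $(H_i)_0$ (noting $\sigma_i((H_i)_0)\subseteq(\tilde S)_0$). With $T$ in hand, \Cref{th:denseim} applied to the surjection $T\twoheadrightarrow S$ gives $\mathrm{ci}(T)\ge n$, while \Cref{pr:cile} applied to the closed semisimple embedding $T\le(\overline{\Sigma(H)})_0$ forces $\mathrm{ci}(T)\le r$ --- contradicting $n>r$.
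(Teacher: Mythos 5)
Your overall architecture is the paper's: assume the coproduct $G$ exists, compress into an almost-connected open subgroup $H\le G$ using \Cref{def:compr} and the infinitude of $G_1$, extend the witnessing representations of the $H_i$ to the $G_i$ by finite-index induction so that the universal property produces $\Sigma:G\to GL(W)$, and then squeeze the characteristic index of $\bigl(\overline{\Sigma(H)}\bigr)_0$ between the fixed $\mathrm{ci}(G_0)$ (via \Cref{le:connsurj} and \Cref{th:denseim}) and an arbitrarily large $\mathrm{ci}(S)$ (via \Cref{pr:cile}). The paper delegates the one delicate step --- making the large semisimple witness live upstairs in $GL(W)$ rather than downstairs in $GL(V)$ --- to \Cref{le:finindok}; you attack it head-on with a Levi--Mal'cev lift, and that is exactly where your argument has a gap.

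The containment $\mathrm{Lie}(S)\subseteq dp\bigl(\mathrm{Lie}(\tilde S_0)\bigr)$ is not justified. Dense image of $p$ does not give surjectivity of $dp$ onto $\mathrm{Lie}\bigl((S')_0\bigr)$ --- an irrational winding $\bR\to\bT^2$ has dense image while $dp$ lands in a line --- so knowing that $S$ sits inside the \emph{closure} of $p(\tilde S_0)$, which is all that ``$S$ is generated by the images of the $(H_i)_0$'' could buy you, does not place $\mathrm{Lie}(S)$ inside $dp\bigl(\mathrm{Lie}(\tilde S_0)\bigr)$. Moreover that arrangement is not available: the hypothesis hands you $S$ inside $\overline{\langle\rho_i(H_i)\rangle}$ for whatever witnesses $\rho_i$ it happens to provide, and $(H_2)_0$ may be trivial (e.g.\ $G_2$ profinite), so you cannot re-choose $S$ to be generated by images of identity components. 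The containment you need is nevertheless true, but for a different reason: $\mathrm{Lie}(S)$ is perfect, and by the Mal'cev--Goto theorem on closures of analytic subgroups the derived algebra of $\mathrm{Lie}(\overline{A})$ is contained in $\mathrm{Lie}(A)$ for any analytic subgroup $A$; applied to $A=p(\tilde S_0)$ this reduces the problem to showing $S\subseteq\overline{p(\tilde S_0)}$, which itself needs care since $\tilde S$ need not be almost-connected and \Cref{le:connsurj} does not apply to it directly. Without some such input the lift of $\mathrm{Lie}(S)$ has nowhere to start. The remaining components of your proposal --- the uniform upper bound, the induction and padding, closedness of the lifted $T$ via \cite[Corollary to Theorem 2]{goto1}, and the final squeeze --- are sound.
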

\begin{proof}
  We will argue by contradiction, assuming $G_i$ \define{do} have a coproduct $G$ in $\mathcal{LCG}$.
  
  Like all locally compact groups, $G$ has an open subgroup $H\le G$ containing the identity component $G_0$ such that $H/G_0$ is compact and hence approximable by Lie groups \cite[\S 4.0]{mz}. By the compression procedure of \Cref{def:compr} (which requires the assumed infinitude of $H_1$) we then have non-trivial open subgroups $H_i\le G_i$ mapped to $H$ by the canonical morphisms $G_i\to G$.

  Now, our $\mathrm{ci}_{ss}$-unboundedness assumption allows us, via \Cref{le:finindok}, to construct finite-dimensional representations $\rho_i:G_i\to GL(V)$ such that $H_i$ topologically generate linear groups $S\le GL(V)$ containing connected semisimple subgroups with large characteristic index. The restrictions $\rho_i|_{H_i}$ factor through the almost-connected open subgroup $H\le G$, and hence the connected component
  \begin{equation*}
    \overline{\langle \varphi_i(H_i)\rangle}\le GL(V)
  \end{equation*}
  will be mapped to densely by $H_0=G_0$ (\Cref{le:connsurj}). In summary:
  \begin{equation*}
    \begin{tikzpicture}[auto,baseline=(current  bounding  box.center)]
      \path[anchor=base] 
      (0,0) node (u) {$H_0$}
      +(0,-2) node (d) {$S$}
      +(2,-1) node (r) {$\overline{\langle \varphi_i(H_i)\rangle}$}
      ;
      \draw[->] (u) to[bend left=6] node[pos=.5,auto] {$\scriptstyle \text{dense image}$} (r);
      \draw[->] (d) to[bend right=6] node[pos=.5,auto,swap] {$\scriptstyle \text{closed embedding}$} (r);
    \end{tikzpicture}
  \end{equation*}
  We \Cref{th:denseim} and \Cref{pr:cile} respectively prove
  \begin{equation*}
    \mathrm{ci}(H_0)\ge \mathrm{ci}(\overline{\langle \varphi_i(H_i)\rangle})\quad \text{and}\quad \mathrm{ci}(\overline{\langle \varphi_i(H_i)\rangle})\ge \mathrm{ci}(S),
  \end{equation*}
  which render $\mathrm{ci}(H_0)$ unbounded and deliver the desired contradiction.
\end{proof}

\begin{lemma}\label{le:connsurj}
  Let $G$ be an almost-connected locally compact group and $\varphi:G\to L$ a morphism of locally connected groups. The restriction
  \begin{equation*}
    \varphi|_{G_0}:G_0\to \left(\overline{\varphi G}\right)_0
  \end{equation*}
  to the identity component must then have dense image.
\end{lemma}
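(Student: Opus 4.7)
The plan is to let $M := \overline{\varphi(G)}$ and $H := \overline{\varphi(G_0)}$, so that the statement to prove becomes $H = M_0$. Since $\varphi(G_0)$ is connected (continuous image of connected) and contains the identity, the inclusion $H \subseteq M_0$ is automatic; the content lies in the reverse inclusion, which is the focus of the argument.

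First I would observe that $H$ is normal in $M$. Because $G_0 \triangleleft G$ and $\varphi$ is a homomorphism, $\varphi(G)$ normalizes $\varphi(G_0)$, hence also its closure $H$. The normalizer of a closed subgroup is closed, so $M = \overline{\varphi(G)}$ normalizes $H$ as well; therefore $M/H$ is a Hausdorff topological group. The composite
\begin{equation*}
  G \xrightarrow{\varphi} M \xrightarrow{q} M/H
\end{equation*}
annihilates $G_0$ (by definition of $H$), so it descends to a continuous homomorphism $\bar\varphi: G/G_0 \to M/H$. Its image $\varphi(G)H/H$ is dense in $M/H$ because $\varphi(G)$ is dense in $M$ and $q$ is continuous and surjective; it is also compact as the continuous image of the compact group $G/G_0$, hence closed in the Hausdorff space $M/H$. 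The image therefore equals $M/H$.

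So we have a continuous surjective homomorphism from $G/G_0$ onto $M/H$. Since $G$ is almost-connected, $G/G_0$ is compact; and $G/G_0$ is totally disconnected for any topological group. Hence $G/G_0$ is profinite. The compact-to-Hausdorff closed-map lemma turns $\bar\varphi$ into a topological quotient: it factors as the quotient $G/G_0 \to (G/G_0)/\ker\bar\varphi$ followed by a continuous bijection from a compact space to a Hausdorff space, which is automatically a homeomorphism. Since quotients of profinite groups (by closed normal subgroups) are profinite, we conclude that $M/H$ is profinite, hence totally disconnected. Therefore $M_0 \subseteq H$, and combined with $H \subseteq M_0$ this yields $H = M_0$, as required.

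The only step with any subtlety is the identification of $M/H$ as a topological quotient of $G/G_0$; but as noted, this is a direct application of the compact-Hausdorff principle. Everything else is a matter of correctly setting up $H$, verifying its normality by closure of normalizers, and invoking the standard facts that $G/G_0$ is profinite when $G$ is almost-connected and that profinite groups are closed under quotients.
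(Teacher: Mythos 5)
Your argument is correct and follows essentially the same route as the paper's proof: replace $L$ by $M=\overline{\varphi(G)}$, show $H=\overline{\varphi(G_0)}$ is normal, and deduce that $M/H$ is a profinite (hence totally disconnected) quotient of $G/G_0$, forcing $M_0\subseteq H$. The only quibble is your parenthetical claim that $G/G_0$ is totally disconnected ``for any topological group'' --- this can fail in general, but it does hold for locally compact groups, which is the setting here, so the proof stands.
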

\begin{proof}
  Restricting our attention to $\overline{\varphi G}$, we will assume without loss of generality that $\varphi:G\to L$ has dense image to begin with. The goal is then to show that so does
  \begin{equation*}
    \varphi|_{G_0}:G_0\to L_0. 
  \end{equation*}
  Now, $\varphi G_0$ is normal in $\varphi G$, so it must be normal in its closure $L$ as well. But then so is its closure $\overline{\varphi G_0}$, and $\varphi$ induces a dense-image morphism
  \begin{equation}\label{eq:pfpf}
    G/G_0\to L/\overline{\varphi G_0}. 
  \end{equation}
  Because by assumption $G/G_0$ is profinite and quotients of profinite groups are profinite \cite[Exercise E1.13]{hm}, the codomain $L/\overline{\varphi G_0}$ of \Cref{eq:pfpf} will in particular be totally disconnected (in addition to being compact). But this was our goal to begin with: showing that $\overline{\varphi G_0}$ contains the connected component $L_0$.
\end{proof}

Next, we turn to the hypotheses of \Cref{pr:lgind} and whether or not they are satisfied in the cases of interest.

\begin{proposition}\label{pr:areciunb}
  Pairs $\{G_i,\ i=1,2\}$ of non-trivial locally compact groups with $G_1$ connected and $G_2$ almost-connected are jointly $\mathrm{ci}_{ss}$-unbounded in the sense of \Cref{def:unb}.
\end{proposition}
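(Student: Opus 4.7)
The plan is to exhibit, for every $N$, continuous representations $\varphi_i : G_i \to GL(V)$ into a common finite-dimensional real vector space $V$ whose joint image topologically generates a subgroup of $GL(V)$ that contains a connected semisimple Lie subgroup of characteristic index at least $N$; this directly witnesses the joint $\mathrm{ci}_{ss}$-unboundedness sought.

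\textbf{Reduction to the Lie case.} Apply Montgomery--Zippin to each $G_i$: pick a compact normal $K_i \triangleleft G_i$ with $G_i/K_i$ Lie, choosing $K_1$ inside a neighborhood of $1 \in G_1$ that does not exhaust $G_1$ (and similarly for $K_2$). Connectedness and almost-connectedness both persist under such quotients, and the non-triviality of $G_1/K_1$ and $G_2/K_2$ is preserved by the choice of $K_i \subsetneq G_i$. Because representations of the quotients pull back to representations of the $G_i$ with identical image, it is enough to treat the case where $G_1$ is a non-trivial connected Lie group and $G_2$ is a non-trivial almost-connected Lie group.

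\textbf{Main construction.} Choose a non-zero $X_1 \in \mathrm{Lie}(G_1)$ and form the non-trivial one-parameter subgroup $\lambda_1(t) = \exp(tX_1) \le G_1$; choose also a non-trivial element $g_2 \in G_2$. For each dimension $n$, I would construct representations $\rho_i^{(n)} : G_i \to SL(n,\mathbb{R})$ such that $\rho_1^{(n)} \circ \lambda_1$ is a non-trivial one-parameter subgroup of $SL(n,\mathbb{R})$ --- such a representation exists because the intersection of kernels of finite-dimensional representations of $G_1$ has trivial Lie algebra (Ado's theorem applied via the linear Lie quotient furnished by \Cref{pr:fgtor}), so any non-zero $X_1$ is detected by summing enough copies of a non-trivial representation to land in $SL(n,\mathbb{R})$ --- and $\rho_2^{(n)}(g_2)$ is chosen ``generically'' in $SL(n,\mathbb{R})$ subject to the algebraic constraints $g_2$ satisfies in $G_2$ (e.g.\ torsion order, or belonging to a prescribed one-parameter subgroup), so that its adjoint action on $\mathfrak{sl}(n,\mathbb{R})$ does not stabilize $\mathrm{Lie}(\rho_1^{(n)}(G_1))$.

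\textbf{Main obstacle.} The hardest step is verifying that, with these choices, the Lie subalgebra of $\mathfrak{sl}(n,\mathbb{R})$ generated by $\mathrm{Lie}(\rho_1^{(n)}(G_1))$ together with its $\rho_2^{(n)}(g_2)$-conjugates fills out a semisimple Lie subalgebra of dimension tending to infinity with $n$. Once that is arranged, the corresponding connected semisimple Lie subgroup of $SL(n,\mathbb{R})$ lies inside $\overline{\langle \rho_1^{(n)}(G_1),\rho_2^{(n)}(G_2)\rangle}$, and \Cref{pr:cile} yields characteristic index growing unboundedly as $n \to \infty$. I expect this last step to require a case split by the structural type of $G_1$ (compact vs.\ non-compact, abelian vs.\ semisimple) and the nature of $g_2$ (finite-order vs.\ continuous), but to follow in each case from standard density results for generic pairs of elements in semisimple Lie groups.
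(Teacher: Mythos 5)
There is a genuine gap, and you have located it yourself: the step you label the ``main obstacle'' is the entire mathematical content of the proposition, and nothing in the proposal resolves it. The difficulty is not a matter of genericity. The matrix $\rho_2^{(n)}(g_2)$ cannot be ``chosen generically in $SL(n,\mathbb{R})$ subject to the algebraic constraints $g_2$ satisfies,'' because $\rho_2^{(n)}$ must be a continuous homomorphism defined on all of $G_2$; its value at $g_2$ therefore ranges only over the images of $g_2$ under the (typically very rigid) finite-dimensional representation theory of $G_2$, not over anything resembling a Zariski-generic subset. In the extreme case $G_2=\mathbb{Z}/2$ you are reduced to showing that the line $\mathrm{Lie}(\rho_1^{(n)}(G_1))$ --- itself constrained to be the tangent line of a homomorphic image of $G_1$ --- together with a \emph{single} conjugate of it generates a semisimple subalgebra of $\mathfrak{sl}(n,\mathbb{R})$ of unbounded dimension. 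That is a real theorem requiring proof, not a consequence of ``standard density results for generic pairs.'' A secondary inaccuracy: the common kernel of all finite-dimensional representations of a connected Lie group need \emph{not} have trivial Lie algebra (in \Cref{ex:nector} every finite-dimensional representation kills an embedded circle), though this is repairable since you only need one nonzero $X_1$ to be detected by some representation.

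For comparison, the paper closes exactly this gap as follows. It first reduces each connected group to either $\mathbb{S}^1$ or a connected simple linear Lie group, then locates circle subgroups $T_i\le G_i$ and representations in which the $T_i$ act with sufficiently generic, pairwise-distinct characters on $n$ common coordinate lines; this makes the $T_i$ \emph{strongly regular} circle subgroups of $PSL(n)$ in the sense of \cite[Definition 3.1]{2106.11955v1}, and the companion result \cite[Theorem 3.5]{2106.11955v1} then guarantees that suitable conjugates of the two circles topologically generate all of $PSL(n)$, whose characteristic index grows with $n$. The case where $G_2$ is profinite is handled separately by passing to a finite cyclic quotient $\mathbb{Z}/m$, writing $G_1 * (\mathbb{Z}/m)$ as a semidirect product of $m$ conjugates of $G_1$ by $\mathbb{Z}/m$, and inducing the representations constructed in the two-circle case. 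Your sketch would need an input of comparable strength at the flagged step; without one, the argument does not go through.
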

\begin{proof}
  The argument proceeds in a number of stages.

  {\bf Step 1: simplifying the connected case.} Since connected locally compact groups are pro-Lie \cite[\S 4.6, Theorem]{mz}, whenever we assume one of $G_i$ is connected we may as well take a quotient and assume it is Lie (and non-trivial, and connected). 
 
  Furthermore, the Lie algebra $\fg_i:=Lie(G_i)$ will then either be solvable or have a non-trivial semisimple quotient (\cite[Proposition 1.12 and subsequent discussion]{knp}). In the former case $\fg_i$ has $\bR$ as a quotient and hence $G_i$ surjects onto a circle group; in the latter $G_i$ surjects onto a \define{simple} connected Lie group (via the Lie-group-Lie-algebra correspondence \cite[Theorem 5.20]{hl-bk}, because its Lie algebra does \cite[Theorem 7.8]{hl-bk}). In summary, when dealing with connected $G_i$ we can (and will) assume that the group in question is either $\bS^1$ or connected, simple and linear.

  {\bf Step 2: $G_i$ both connected. } That is, circles or simple, by Step 1. We can then find
  \begin{itemize}
  \item circles $T_i\le G_i$;
  \item and finite-dimensional representations $\rho_i:G_i\to GL(V_i)$;
  \item in which, for some large $n$, the circles $T_i$ operate with at least $n$ eigenvalues (i.e. characters) $\chi_{i,j}$, $1\le j\le n$ that are sufficiently generic, i.e. in the sense that for each $i=1,2$ the characters
    \begin{equation*}
      \{\chi_{i,j}\}_j \quad\text{and}\quad \{\chi_{i,j}\chi_{i,j'}^{-1}\}_{j\ne j'}
    \end{equation*}
    are all distinct and non-trivial.
  \end{itemize}
  Taking multiple copies of each $V_i$ if necessary we can assume that $\rho_i$ are realized on a common space $V$, and then conjugating $\rho_2$ inside $GL(V)$, we can further assume that $T_i$ operate with respective eigenvalues $\chi_{i,j}$ on $n$ common lines
  \begin{equation*}
    \bC e_{j}\subset V,\ 1\le j\le n. 
  \end{equation*}
  By construction the $T_i$ restrict to {\it strongly regular} circle subgroups of $L:=PSL(\mathrm{span}\{e_j,\ 1\le j\le n\})$ in the sense of \cite[Definition 3.1]{2106.11955v1}, and by \cite[Theorem 3.5]{2106.11955v1} some pair of conjugates of those groups will generate $L$. Since the latter can be chosen to have arbitrarily large characteristic index, we are done. 

  {\bf Step 3: The connected-profinite case. } That is, one of the groups ($G_1$, say) is assumed connected and the other profinite and non-trivial. We can then substitute a {\it finite} non-trivial quotient for $G_2$, and then a finite cyclic subgroup thereof via \Cref{le:finindok}. In short, assume
  \begin{equation*}
    G\cong \bZ/n=\langle \sigma\rangle,\ n\ge 2.
  \end{equation*}
  But then
  \begin{equation}\label{rtimeszn}
    G_1 * G_2 \cong G_1 * (\bZ/n)\cong \left(\Asterisk_{i=0}^{n-1} Ad_{\sigma^i} G_1\right)\rtimes (\bZ/n),
  \end{equation}
  and we can fall back on the argument from Step 2: represent two of the conjugates $Ad_{\sigma^i} G_1$ appropriately, then induce that representation to \Cref{rtimeszn}.

  To conclude, note that upon applying Step 3 to $G_1$ and $G_2/G_{2,0}$ we conclude that the latter is trivial, i.e. $G_2$ is connected. But then Step 2 applies.
\end{proof}

\pf{th:aconn}
\begin{th:aconn}
  Immediate from \Cref{pr:lgind,pr:areciunb}.
\end{th:aconn}


\def\polhk#1{\setbox0=\hbox{#1}{\ooalign{\hidewidth
  \lower1.5ex\hbox{`}\hidewidth\crcr\unhbox0}}}

\addcontentsline{toc}{section}{References}

\Addresses

\end{document}